\definecolor{darkgreen}{rgb}{0.0, 0.6, 0.0}
\def\A{\mathcal{A}}
\def\C{\mathcal{C}}
\def\D{\mathcal{D}}
\def\E{\mathcal{E}}
\def\F{\mathcal{F}}
\def\G{\mathcal{G}}
\def\H{\mathcal{H}}
\def\L{\mathcal{L}}
\def\O{\mathcal{O}}
\def\T{\mathcal{T}}
\def\V{\mathcal{V}}
\DeclareMathOperator{\md}{\mathsf{mod}}
\renewcommand{\mod}{\md}
\DeclareMathOperator{\add}{\mathsf{add}}
\DeclareMathOperator{\Filt}{\mathsf{Filt}}
\DeclareMathOperator{\thick}{\mathsf{thick}}
\DeclareMathOperator{\Coh}{\mathsf{Coh}}
\DeclareMathOperator{\vect}{\mathsf{vect}}
\DeclareMathOperator{\Hom}{Hom}
\DeclareMathOperator{\sheafHom}{\H{\it om}}
\DeclareMathOperator{\sheafEnd}{\E{\it nd}}
\DeclareMathOperator{\RHom}{\mathbb{R}Hom}
\DeclareMathOperator{\End}{End}
\DeclareMathOperator{\Ext}{Ext}
\DeclareMathOperator{\per}{per}
\DeclareMathOperator{\Ker}{Ker}
\DeclareMathOperator{\Div}{Div}
\DeclareMathOperator{\Spec}{Spec}
\DeclareMathOperator{\Sing}{Sing}
\DeclareMathOperator{\RG}{\mathbb{R}\Gamma}
\DeclareMathOperator{\ch}{char}
\DeclareMathOperator{\Tot}{Tot}
\DeclareMathOperator{\tr}{tr}
\DeclareMathOperator{\rk}{rk}
\def\gl{\mathop{\rm gl.dim}\nolimits}
\theoremstyle{definition}
\newtheorem{Thm}{Theorem}[section]
\newtheorem{Lem}[Thm]{Lemma}
\newtheorem{Prop}[Thm]{Proposition}
\newtheorem{Cor}[Thm]{Corollary}
\newtheorem{Def}[Thm]{Definition}
\newtheorem{Ex}[Thm]{Example}
\newtheorem{Rem}[Thm]{Remark}
\newtheorem{Conj}[Thm]{Conjecture}
\newcommand{\FRAC}[2]{\leavevmode\kern.1em\raise.5ex\hbox{\the\scriptfont0 #1}\kern-.1em/\kern-.15em\lower.25ex\hbox{\the\scriptfont0 #2}}
\title{Weak del Pezzo surfaces are characterized by the existence of $2$-tilting bundles}
\author{Ryu Tomonaga}
\address{Graduate School of Mathematical Sciences, The University of Tokyo, 3-8-1 Komaba, Meguro-ku, Tokyo, 153-8914, Japan}
\email{ryu-tomonaga@g.ecc.u-tokyo.ac.jp}
\begin{document}
\begin{abstract}
Tilting bundles provide a fundamental bridge between algebraic geometry and representation theory. For a tilting bundle on a smooth proper $d$-dimensional variety, the global dimension of its endomorphism algebra is at least $d$, and the most meaningful case is when this lower bound is attained. Such a tilting bundle, called a $d$-tilting bundle, fits into the framework of the derived McKay correspondence and higher Auslander--Reiten theory.

The first main result of this paper shows that the existence of such a bundle forces the variety to be weak Fano: more precisely, if a smooth proper $d$-dimensional variety admits a $d$-tilting bundle, then its anti-canonical bundle is semiample and big. As a consequence, the endomorphism algebra of a $d$-tilting bundle is $d$-representation tame, so the geometry naturally produces higher-dimensional analogues of extended Dynkin quivers.

Second, we prove a converse in dimension two: every weak del Pezzo surface over an algebraically closed field admits a $2$-tilting bundle. Together, these results give an affirmative answer to a conjecture posed in \cite{Cha} for the variety case: a smooth projective surface admits a $2$-tilting bundle if and only if it is a weak del Pezzo surface.

As an application, we construct non-commutative crepant resolutions (NCCRs) of anti-canonical cones over Du Val del Pezzo surfaces.  Such an NCCR is obtained as the $3$-Calabi--Yau completion of the endomorphism algebra of a $2$-tilting bundle on the corresponding weak del Pezzo surface.  This extends the known construction for smooth del Pezzo surfaces to the Du Val case and places Du Val del Pezzo cones within the framework of the derived McKay correspondence via higher Auslander--Reiten theory.
\end{abstract}

\maketitle
\tableofcontents

\section*{Introduction}

Tilting theory provides a powerful mechanism for translating geometry into algebra: a tilting bundle $\E$ on a smooth proper variety $X$ yields a derived equivalence
\[
\D(X)\ \simeq\ \D(\End_X(\E)).
\]
Under this equivalence, geometric information about $X$ can be studied through representation theory of the finite dimensional algebra $\End_X(\E)$, and conversely representation-theoretic structures can be realized geometrically. Historically, tilting bundles were first found by Beilinson on projective space $\mathbb{P}^d$ \cite{Bei}. After that, many examples have been constructed in various settings, including McKay correspondence and non-commutative crepant resolutions (NCCRs) \cite{BH,HIMO,HP14,Kap,Tom25b}.

Through $\mathbb{P}^d$, we explain that tilting theory provides a bridge between many areas of mathematics. First, the endomorphism algebra of Beilinson's tilting bundle $\E:=\bigoplus_{i=0}^d\O_{\mathbb{P}^d}(i)$ has global dimension $d$ (for example, when $d=1$, this algebra is the path algebra $k[\xymatrix@C=15pt{\circ \ar@2[r] & \circ}]$ of the Kronecker quiver). Next, consider the total space $\pi\colon T:=\Tot(\omega_{\mathbb{P}^d})\to\mathbb{P}^d$ of the canonical bundle. Then the pullback $\pi^*\E$ of $\E$ gives a tilting bundle on $T$ whose endomorphism algebra is the $(d+1)$-preprojective algebra $\Pi:=\Pi_{d+1}(\End_{\mathbb{P}^d}(\E))$ of the endomorphism algebra of the tilting bundle $\E$. Moreover, if we consider the anti-canonical ring $R:=\Gamma(T,\O_T)=\bigoplus_{n\ge0}\Gamma(\mathbb{P}^d,\omega_{\mathbb{P}^d}^{-n})\cong k[x_0,\cdots,x_d]^{(d+1)}$, which becomes the Du Val singularity of type $A_1$ when $d=1$, then $T$ gives a crepant resolution of $\Spec R$ and $\Pi$ gives a non-commutative crepant resolution of $R$. Thus the derived equivalence between $T$ and $\Pi$ can be seen as derived McKay correspondence \cite{BKR}. We remark that a theoretical explanation of these phenomena is given by Keller's Calabi--Yau completion \cite{Kel11}.

We seek a generalization of the above beautiful phenomena. If a $d$-dimensional smooth proper variety $X$ has a tilting bundle $\E$, then $d\le\gl\End_X(\E)\le 2d$ always holds \cite{BF}. Buchweitz and Hille found that the above phenomena occur exactly when $\gl\End_X(\E)=d$ \cite{BuH}. Such a tilting bundle is called a \emph{$d$-tilting bundle} \cite{HIMO}.

\begin{Thm}\cite{BuH}
Let $X$ be a $d$-dimensional smooth proper variety. Assume that $X$ has a tilting bundle $\E$. Then the following conditions are equivalent.
\begin{enumerate}
\item $\E$ is a $d$-tilting bundle, i.e. $\gl\End_X(\E)=d$ holds.
\item Let $\pi\colon\Tot(\omega_X)\to X$ be the total space of the canonical bundle. Then $\pi^*\E$ is a tilting bundle on $\Tot(\omega_X)$.
\end{enumerate}
If these conditions are satisfied, then $\End_X(\E)$ becomes a $d$-representation infinite algebra.
\end{Thm}

Here, a {\it $d$-representation infinite algebra} is a distinguished class of finite dimensional algebras of global dimension $d$ introduced by \cite{HIO}, which is a generalization of non-Dynkin path algebras to the case of global dimension $d$ from the viewpoint of higher Auslander--Reiten theory. These algebras have representation theory similar to non-Dynkin path algebras \cite{HIO}, and are characterized by the property that their $(d+1)$-Calabi--Yau completion \cite{Kel11} is concentrated in degree $0$.

A central problem is to determine which varieties admit $d$-tilting bundles. As for curves, it is well-known that a smooth projective curve has a ($1$-)tilting bundle if and only if it is $\mathbb{P}^1$. As for surfaces, Daniel Chan conjectured the following. Note that in the original paper \cite{Cha}, a more general setting of root stacks of surfaces is considered.

\begin{Conj}\cite{Cha}\label{introconj}
For a smooth projective surface $X$, the following conditions are equivalent.
\begin{enumerate}
\item $X$ has a $2$-tilting bundle.
\item $X$ is a weak del Pezzo surface.
\end{enumerate}
\end{Conj}

We explain some known results on Conjecture \ref{introconj}. Hille and Perling constructed tilting bundles on arbitrary smooth projective rational surfaces \cite{HP14}. However, the $2$-tilting condition is substantially stronger and is closely tied to positivity properties of $-K_X$. Following the work of Van den Bergh and subsequent developments, it has been observed that all del Pezzo surfaces admit $2$-tilting bundles \cite[7.3]{VdB04a}, and many weak del Pezzo surfaces do as well \cite{BHI,EXZ,HP11}. For the implication (1)$\Rightarrow$(2), it was proven in \cite[1.2]{Cha} that if a smooth projective surface $X$ over a field of characteristic zero has a $2$-tilting bundle, then $-K_X$ is nef.


In this paper, we prove this conjecture completely. Note that our results do not depend on the characteristic of the base field. First, we prove that the existence of a $d$-tilting bundle forces the variety to be weak Fano in arbitrary dimension $d\geq1$.

\begin{Thm}[Theorem \ref{nece}, Corollary \ref{RPifin}]\label{intronece}
Let $X$ be a $d$-dimensional smooth proper variety. Assume $X$ admits a $d$-tilting bundle $\E\in\vect X$.
\begin{enumerate}
\item $\omega_X^{-1}$ is semiample and big. In particular, $X$ is weak Fano.
\item The endomorphism algebra $\End_X(\E)$ is $d$-representation tame.
\end{enumerate}
\end{Thm}

Here, a {\it $d$-representation tame algebra} is a subclass of $d$-representation infinite algebras generalizing extended Dynkin path algebras. 

We remark that Theorem \ref{intronece} gives a partial answer to the following well-known conjecture: a smooth projective variety has a tilting bundle only if it is rational (Corollary \ref{rat}).

Second, we prove that every weak del Pezzo surface has a $2$-tilting bundle.

\begin{Thm}[Theorem \ref{suff}]\label{introsuff}
Let $X$ be a weak del Pezzo surface over an algebraically closed field. Then $X$ admits a $2$-tilting bundle $\E\in\vect X$ such that $\E$ contains $\O_X$ as a direct summand.
\end{Thm}

By combining Theorem \ref{intronece} and \ref{introsuff}, we give a complete answer to Conjecture \ref{introconj}.

\begin{Cor}
Conjecture \ref{introconj} is true.
\end{Cor}

Beyond the existence problem itself, Theorem \ref{introsuff} has an application to NCCRs. The concept of NCCR is an algebraic counterpart of crepant resolutions, which was introduced in \cite{VdB04a} to generalize the derived McKay correspondence \cite{BKR}. The existence of NCCRs has been proved for quotient singularities \cite{Iya07a,Tom24}, compound Du Val singularities having crepant resolutions \cite{Vdb04b}, some toric singularities \cite{Bro,HN,Mat22,Tom25d,VdB04a,SVdB20a,SVdB20b} and so on \cite{Han25,Har,SVdB17}. NCCRs are also studied from the viewpoint of mutations \cite{HH24,IW,NVdB26}.

In \cite[7.3]{VdB04a}, NCCRs of cones of (smooth) del Pezzo surfaces are constructed by using $2$-tilting bundles on del Pezzo surfaces, which are recently studied extensively by \cite{NVdB26}. As a generalization of this result, we construct NCCRs for cones of arbitrary Du Val del Pezzo surfaces by using Theorem \ref{introsuff}.

\begin{Thm}[Theorem \ref{NCCRsingdP}]
Let $X$ be a Du Val del Pezzo surface. Then its homogeneous coordinate ring $R:=\bigoplus_{n=0}^\infty\Gamma(X,\omega_X^{-n})$ has an NCCR.
\end{Thm}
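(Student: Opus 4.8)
The plan is to realise the claimed NCCR geometrically on the total space of the canonical bundle of a resolution, following the strategy of \cite[7.3]{VdB04a} but now in the Du Val (rather than smooth del Pezzo) setting. First I would pass to the minimal resolution $f\colon\widetilde X\to X$. Since the singularities of a Du Val del Pezzo surface are rational double points, $f$ is crepant and $\widetilde X$ is a weak del Pezzo surface: $\omega_{\widetilde X}=f^*\omega_X$ and $\omega_{\widetilde X}^{-1}$ is nef and big. As Du Val singularities are rational, $f_*\O_{\widetilde X}=\O_X$, so the projection formula gives $R=\bigoplus_{n\ge0}\Gamma(X,\omega_X^{-n})=\bigoplus_{n\ge0}\Gamma(\widetilde X,\omega_{\widetilde X}^{-n})$. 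In particular $R$ is the anti-canonical section ring of the weak del Pezzo surface $\widetilde X$, and $\Spec R$ is Gorenstein of dimension $3$.

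By Theorem \ref{suff}, the weak del Pezzo surface $\widetilde X$ carries a $2$-tilting bundle $T$, whose endomorphism algebra $\Lambda:=\End_{\widetilde X}(T)$ is automatically $2$-representation infinite. Let $\pi\colon Y:=\mathrm{Tot}(\omega_{\widetilde X})\to\widetilde X$ be the total space of the canonical bundle; then $Y$ is a smooth quasi-projective $3$-fold with $\omega_Y\cong\O_Y$, and $\Gamma(Y,\O_Y)=\bigoplus_{n\ge0}\Gamma(\widetilde X,\omega_{\widetilde X}^{-n})=R$. The pullback $\pi^*T$ is a tilting bundle on $Y$ by Proposition \ref{dtiltsheaf}, and its endomorphism algebra is the $3$-Calabi-Yau completion $\Pi_3(\Lambda)$ in the sense of \cite{KVdB}. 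The affinization map $g\colon Y\to\Spec R$ contracts the zero section together with the $(-2)$-curves of $\widetilde X$ (where $\omega_{\widetilde X}^{-1}$ is trivial), so it is projective, birational, and an isomorphism over the smooth locus of $\Spec R$; since $\omega_Y$ and $\omega_{\Spec R}$ are both trivial, $g$ is crepant. Thus $g$ is a crepant resolution of the Du Val del Pezzo cone $\Spec R$.

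It then remains to check that $\Gamma:=\End_Y(\pi^*T)\cong\Pi_3(\Lambda)$ is an NCCR of $R$. Setting $M:=g_*(\pi^*T)$, properness of $g$ together with normality of $R$ gives $\Gamma\cong\End_R(M)$ with $M$ a reflexive $R$-module, while the tilting property yields a derived equivalence $D^b(\mathrm{coh}\,Y)\simeq D^b(\mod\Gamma)$, whence $\gl\Gamma<\infty$ from smoothness of $Y$. The one remaining---and main---point is the maximal Cohen-Macaulay property of $\Gamma$ as an $R$-module: I expect this to be the crux, and I would deduce it from crepancy of $g$ via the standard argument that a tilting bundle on a crepant resolution of a Gorenstein threefold has homologically homogeneous, in particular $R$-Cohen-Macaulay, endomorphism algebra (using $\mathbb{R}g_*\O_Y=\O_R$ and Grothendieck duality). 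Combining finite global dimension with this Cohen-Macaulay property shows that $\Gamma=\End_R(M)$ is an NCCR of $R$, as claimed.
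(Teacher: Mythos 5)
Your overall route is the same as the paper's: pass to the minimal resolution $f\colon\widetilde X\to X$ (crepant, $\widetilde X$ weak del Pezzo), take the $2$-tilting bundle of Theorem \ref{suff}, pull it back to the total space of $\omega_{\widetilde X}$ where it is tilting by Proposition \ref{dtiltsheaf}, note that this total space is a crepant resolution of $\Spec R$, and conclude by the argument of \cite[7.2]{VdB04a}. (The paper gets projectivity and crepancy of $Tot(\omega_{\widetilde X})\to\Spec R$ by factoring it as $Tot(\omega_{\widetilde X})=Tot(\omega_X)\times_X\widetilde X\to Tot(\omega_X)\to\Spec R$, rather than asserting it for the affinization map directly; that is a minor difference.)

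There is, however, one step whose justification as written fails: you claim that ``properness of $g$ together with normality of $R$'' makes $M:=g_*(\pi^*T)$ reflexive. The pushforward of a vector bundle along a projective birational morphism to a normal variety need not be reflexive: for the blow-up $b\colon Z\to\mathbb{A}^2$ of the origin with exceptional curve $E$, the line bundle $\O_Z(-E)$ pushes forward to the maximal ideal $\mathfrak{m}=(x,y)\subseteq k[x,y]$, which satisfies $\mathfrak{m}^{\vee\vee}\cong k[x,y]\ne\mathfrak{m}$. Since an NCCR is by definition $\End_R(M)$ for a \emph{reflexive} $M$, this is exactly the point requiring care, and it is why Theorem \ref{suff} is stated with the extra clause that the $2$-tilting bundle $\E$ can be chosen to contain $\O_{\widetilde X}$ as a direct summand, a clause your write-up never uses. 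With that summand, $\O_Y$ is a direct summand of the tilting bundle $\pi^*\E$, so $H^{>0}(Y,\pi^*\E)=0$ and $\Ext_Y^{>0}(\pi^*\E,\O_Y)=0$, and the same Grothendieck-duality-plus-crepancy argument you invoke for $\Gamma$ applies to $M$ itself, showing $M$ is maximal Cohen--Macaulay, hence reflexive; this is how \cite[7.2]{VdB04a}, and the paper following it, proceeds. (Alternatively, one can repair your argument by replacing $M$ with $(g_*\pi^*T)^{\vee\vee}$ and identifying $\End_R\bigl((g_*\pi^*T)^{\vee\vee}\bigr)$ with $\Gamma$, both being reflexive $R$-modules that agree in codimension one.) So the gap is genuine but local and fixable; the rest of your argument matches the paper.
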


This NCCR is given as the $3$-Calabi--Yau completion of the endomorphism algebra of a $2$-tilting bundle (which is $2$-representation tame) of the corresponding weak del Pezzo surface. This fits into the picture of derived McKay correspondence since this NCCR can be realized as the endomorphism algebra of a tilting bundle on the total space of the canonical bundle on the weak del Pezzo surface.

In the Appendix, as an auxiliary result of independent interest, we extend Kuleshov’s structure theorem for rigid torsion-free sheaves on weak del Pezzo surfaces to arbitrary characteristic: any indecomposable rigid torsion-free sheaf on a weak del Pezzo surface has a filtration by exceptional bundles having the same slope (Theorem \ref{rigtorfss}). A main difficulty is that in positive characteristic, there may not exist a smooth anti-canonical curve.

\subsection*{Conventions}
Throughout this paper, $k$ denotes an algebraically closed field of arbitrary characteristic. We write $D:=\Hom_k(-,k)$. All varieties, categories and algebras are defined over $k$. For a variety $X$, we write $\vect X$ for the category of vector bundles on $X$. For a finite dimensional algebra $A$ of finite global dimension, let $\nu:=-\otimes_A^\mathbb{L}DA\colon\per A\xrightarrow[\simeq]{}\per A$ denote the Serre functor. For $d\geq1$, put $\nu_d:=\nu\circ[-d]\colon\per A\xrightarrow[\simeq]{}\per A$.

\subsection*{Use of AI}
During the preparation of this work, the author used ChatGPT-5.5 Pro and
ChatGPT-5.5 Thinking as auxiliary tools to
discuss possible approaches and alternative formulations for specific
arguments, namely Propositions 3.2, 3.4 and 3.6, and to improve the
exposition of preliminary drafts. All mathematical arguments, including
the final proofs of these propositions, were independently verified and
finalized by the author. The author takes full responsibility for the
accuracy, originality and integrity of the paper.

\section*{Acknowledgements}
The author expresses his gratitude to Wahei Hara, Akira Ishii and Hirotaka Onuki for helpful answers to his questions. He is also grateful to Norihiro Hanihara, Lutz Hille, Osamu Iyama, Tatsuro Kawakami, Yusuke Nakajima, Shu Nimura and Kenta Sato for fruitful discussions. This work was supported by the WINGS-FMSP program at the Graduate School of Mathematical Sciences, the University of Tokyo, and JSPS KAKENHI Grant Number JP25KJ0818.

\section{Higher representation infinite algebras in projective geometry}

First, we recall the definition of higher representation infinite algebras introduced by \cite{HIO}. This is a generalization of non-Dynkin path algebras to higher global dimension from the viewpoint of higher Auslander--Reiten theory. Note that all the arguments in this section work without assuming that $k$ is algebraically closed.

\begin{Def}\cite[2.7]{HIO}
Let $A$ be a finite dimensional algebra. For $d\geq1$, $A$ is called {\it $d$-representation infinite} if $\gl A\leq d$ and 
\[\nu_d^{-n}(A)\in\mod A\subseteq\per A\]
holds for all $n\geq0$.
\end{Def}

Remark that the last condition in this definition is equivalent to saying that the $(d+1)$-Calabi--Yau completion \cite{Kel11}
\[\Pi_{d+1}(A)=\bigoplus_{n\geq0}\nu_d^{-n}(A)\]
is concentrated in its cohomological degree $0$.

For basic properties and importance of higher representation infinite algebras, see \cite{HIO}. To see connections between these algebras and projective geometry, we introduce the notion of $d$-tilting object.

\begin{Def}
Let $\T$ be a triangulated category. Take an object $X\in\T$.
\begin{enumerate}
\item $X$ is called {\it pretilting} if $\T(X,X[\neq0])=0$ holds.
\item $X$ is called {\it tilting} if it is pretilting and $\thick X=\T$ holds.
\item For $d\geq1$, $X$ is called {\it d-tilting} if it is tilting and $\gl\End_{\T}(X)\leq d$ holds.
\end{enumerate}
\end{Def}

As in \cite[7.14]{HIMO} and \cite{BuH}, we can prove the following. For (1), see also \cite{BF}.

\begin{Prop}\label{SerreRI}
Let $\A$ be a Hom-finite abelian category and $T\in\A$ a tilting object of $\D^b(\A)$. Assume $\A$ has an auto-equivalence $F\colon\A\xrightarrow[\simeq]{}\A$ such that $F[d]\colon\D^b(\A)\xrightarrow[\simeq]{}\D^b(\A)$ gives a Serre functor.
\begin{enumerate}
\item We have $d\le\gl\End_\A(T)\le 2d$.
\item\cite{BuH} If $\gl\End_\A(T)=d$, then $\End_\A(T)$ becomes a $d$-representation infinite algebra.
\end{enumerate}
\end{Prop}

For the convenience of the reader, we give a complete proof here. To prove this proposition, the following lemma is useful.

\begin{Lem}\label{gldimd}
Let $A$ be a finite dimensional algebra of finite global dimension. For $d\geq1$, the following conditions are equivalent.
\begin{enumerate}
\item $\gl A\leq d$
\item $H^{>0}(\nu_d^{-1}(A))=0$
\item $\nu_d^{-1}(\D^{b,\leq0}(A))\subseteq\D^{b,\leq0}(A)$
\end{enumerate}
\end{Lem}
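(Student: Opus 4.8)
The plan is to prove the cyclic chain of implications $(1)\Rightarrow(3)\Rightarrow(2)\Rightarrow(1)$. Throughout I use that, since $A$ has finite global dimension, $\per A=\D^b(\mod A)$ carries the Serre functor $\nu=-\otimes_A^{\mathbb L}DA$, that $\nu_d^{-1}=\nu^{-1}[d]$, and that for a module $M$ one has $\nu^{-1}M=\RHom_A(DA,M)$, whence $H^j(\nu_d^{-1}M)=\Ext_A^{j+d}(DA,M)$. In particular condition (2) reads $\Ext^{>d}_A(DA,A)=0$.

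For $(1)\Rightarrow(3)$: first I note that $\gl A\leq d$ forces $\pd_A(DA)\leq d$, so for any module $M$ the complex $\nu^{-1}M=\RHom_A(DA,M)$ has cohomology concentrated in degrees between $0$ and $d$, and hence $\nu_d^{-1}M$ has cohomology concentrated in degrees between $-d$ and $0$, so lies in $\D^{b,\leq0}(A)$. For $M$ placed in a degree $i\leq0$ the same holds after the shift $[-i]$, since the top cohomological degree then becomes $i\leq0$. Finally, for a general $X\in\D^{b,\leq0}(A)$ I would run a dévissage on the cohomological amplitude: the truncation triangle $\tau_{\leq-1}X\to X\to H^0(X)\to$ is sent by the exact functor $\nu_d^{-1}$ to a triangle two of whose terms already lie in $\D^{b,\leq0}(A)$ by the induction hypothesis and the module case, and $\D^{b,\leq0}(A)$ is closed under such extensions by the long exact cohomology sequence; this gives $\nu_d^{-1}X\in\D^{b,\leq0}(A)$. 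The implication $(3)\Rightarrow(2)$ is then immediate: $A\in\D^{b,\leq0}(A)$, so $\nu_d^{-1}A\in\D^{b,\leq0}(A)$, i.e. $H^{>0}(\nu_d^{-1}A)=0$.

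The heart of the argument is $(2)\Rightarrow(1)$, and this is where I expect the main obstacle. Rewriting (2) as $\Ext^{>d}_A(DA,A)=0$, I would deduce $\gl A\leq d$ from two facts. The first is the standard identity $\gl A=\pd_A(DA)$ for the injective cogenerator $DA$, which follows from $\gl A=\id({}_AA)$ together with the exact duality $D=\Hom_k(-,k)$ and $\gl A=\gl A^{\op}$. The second, and more delicate, point is that the projective dimension of any finite-dimensional module $M$ is detected by $\Ext$ into $A$ itself, i.e. $\Ext^{\pd M}_A(M,A)\neq0$: working with the minimal projective resolution $\cdots\to P_1\to P_0\to M\to0$, a split surjection $P_{\pd M}\twoheadrightarrow Q$ onto an indecomposable projective summand $Q\hookrightarrow A$ cannot factor through the differential $P_{\pd M}\to P_{\pd M-1}$, because minimality puts the image of that differential inside $\mathrm{rad}\,P_{\pd M-1}$ and any module map carries the radical into $\mathrm{rad}\,A$, whereas $Q\not\subseteq\mathrm{rad}\,A$; hence the relevant cokernel is nonzero. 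Applying this to $M=DA$ gives $\Ext^{\pd_A(DA)}_A(DA,A)\neq0$, so (2) forces $\pd_A(DA)\leq d$, and then $\gl A=\pd_A(DA)\leq d$.

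I expect the genuine subtlety to be isolating exactly why vanishing of $\Ext^{>d}_A(DA,A)$ — an $\Ext$ group with the merely projective, non-injective test object $A$ in the second slot — already controls the whole global dimension. The minimal-resolution/radical argument above is precisely what makes detection against $A$ work, and verifying it carefully, together with the left--right bookkeeping in $\gl A=\pd_A(DA)$, is the one place demanding care rather than formal manipulation.
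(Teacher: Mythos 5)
Your proof is correct, and it is substantially more self-contained than the paper's, which disposes of the lemma in three short strokes: (1)$\Leftrightarrow$(2) is asserted to follow from the identification $H^i(\nu_d^{-1}A)\cong\Ext_A^{d+i}(DA,A)$, (3)$\Rightarrow$(2) is declared obvious, and (1)$\Rightarrow$(3) is outsourced to \cite[5.4(1)]{Iya11}. You instead close the cycle (1)$\Rightarrow$(3)$\Rightarrow$(2)$\Rightarrow$(1). Your truncation d\'evissage proving (1)$\Rightarrow$(3) (reduce to stalk complexes via $\tau_{\leq-1}X\to X\to H^0(X)$, using that $\pd_A(DA)\leq d$ puts $\nu_d^{-1}M$ in cohomological degrees $[-d,0]$ for any module $M$) is essentially the standard argument behind the citation, so nothing is lost there. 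The more substantive difference is (2)$\Rightarrow$(1): the identification $H^i(\nu_d^{-1}A)\cong\Ext_A^{d+i}(DA,A)$ merely translates (2) into $\Ext_A^{>d}(DA,A)=0$, and deducing $\gl A\leq d$ from this is not formal; it requires exactly the two classical facts you make explicit, namely the detection statement $\Ext_A^{\pd M}(M,A)\neq0$ for $\pd M<\infty$ (via the minimal projective resolution and the radical argument, which you carry out correctly) and the identity $\gl A=\pd_A(DA)$ (via $\gl A=\id({}_AA)$ for algebras of finite global dimension, duality, and left--right symmetry of global dimension). Your diagnosis that this direction is the genuine content of the lemma is accurate: the paper compresses it into one line, implicitly appealing to these facts. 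What the paper's route buys is brevity by citation; what yours buys is a complete argument that also makes visible why condition (2), which tests $DA$ only against the projective object $A$ rather than against all modules, nevertheless controls the entire global dimension.
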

\begin{proof}
(1)$\Leftrightarrow$(2) follows from $H^i(\nu_d^{-1}A)\cong\Ext_A^{d+i}(DA,A)$. (3)$\Rightarrow$(2) is obvious. For (1)$\Rightarrow$(3), see \cite[5.4(1)]{Iya11}.
\end{proof}

\begin{proof}[Proof of Proposition \ref{SerreRI}]
Fix a triangle equivalence $\D^b(\A)\simeq\per A$ where $A:=\End_\A(T)$. Observe that we have the following commutative diagram up to a natural isomorphism by the uniqueness of the Serre functor.
\[\xymatrix{
\D^b(\A) \ar[r]_\simeq \ar[d]_{F[d]} & \per A \ar[d]^\nu\\
\D^b(\A) \ar[r]_\simeq & \per A
}\]

(1) By the above commutative diagram, we have
\[H^{>0}(\nu_{2d}^{-1}A)\cong\Ext_\A^{>0}(T,F^{-1}T[d])\cong D\Ext_\A^{<0}(F^{-2}T,T)=0.\]
Thus by Lemma \ref{gldimd} (2)$\Rightarrow$(1), we have $\gl A\le 2d$. Suppose $\gl A<d$ holds. Then by Lemma \ref{gldimd} (1)$\Rightarrow$(2), we have
\[H^{>0}(\nu_{d-1}^{-1}(A))=0.\]
Since $F^{-1}T\in\A$, we have
\[H^{\le0}(\nu_{d-1}^{-1}(A))\cong\Ext_\A^{<0}(T,F^{-1}T)=0.\]
These imply $\nu_{d-1}^{-1}(A)=0$, which is a contradiction. Thus we obtain $\gl A\ge d$.

(2) Take $n\geq0$. Since $F^{-n}T\in\A$, we have
\[H^{<0}(\nu_d^{-n}(A))\cong\Ext_\A^{<0}(T,F^{-n}T)=0.\]
Since $\gl A\leq d$, by Lemma \ref{gldimd} (1)$\Rightarrow$(3), we have
\[H^{>0}(\nu_d^{-n}(A))=0.\qedhere\]
\end{proof}

With these preparations, we can state connections between higher representation infinite algebras and projective geometry in the following way.

\begin{Prop}\cite{BuH}\label{dtiltsheaf}
Let $X$ be a $d$-dimensional smooth proper variety. For a tilting sheaf $\F\in\Coh X$, the following conditions are equivalent.
\begin{enumerate}
\item $\F$ is a $d$-tilting sheaf.
\item $\Ext_X^{>0}(\F, \F\otimes_X\omega_X^{-1})=0$ holds.
\item $\Ext_X^{>0}(\F, \F\otimes_X\omega_X^{-n})=0$ holds for all $n\geq0$.
\item Let $\pi\colon \Tot(\omega_X)\to X$ be the total space of the canonical bundle. Then $\pi^*\F\in\Coh \Tot(\omega_X)$ is a tilting sheaf.
\end{enumerate}
If this is the case, then $\End_X(\F)$ is a $d$-representation infinite algebra. Moreover, we have
\[\End_{\Tot(\omega_X)}(\pi^*\F)\cong\Pi_{d+1}(\End_X(\F)).\]
\end{Prop}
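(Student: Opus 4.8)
The plan is to fix $A:=\End_X(\F)$ and transport everything through the tilting equivalence $G:=\RHom_X(\F,-)\colon\D^b(\Coh X)\xrightarrow{\ \sim\ }\per A$, under which $\F\mapsto A$. Since $X$ is smooth projective of dimension $d$, Serre duality makes $-\otimes_X\omega_X[d]$ a Serre functor on $\D^b(\Coh X)$; by uniqueness of Serre functors it corresponds to $\nu$ on $\per A$, so that $\nu_d=\nu[-d]$ corresponds to the autoequivalence $F:=-\otimes_X\omega_X$ of $\Coh X$. Hence $\nu_d^{-n}A\cong G(\F\otimes_X\omega_X^{-n})=\RHom_X(\F,\F\otimes_X\omega_X^{-n})$ in $\per A$, and passing to cohomology gives the dictionary $H^i(\nu_d^{-n}A)\cong\Ext_X^i(\F,\F\otimes_X\omega_X^{-n})$ on which the rest rests.

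With this dictionary in hand, (1)$\Leftrightarrow$(2) would follow at once from Lemma \ref{gldimd}: $\gl A\leq d$ is equivalent to $H^{>0}(\nu_d^{-1}A)=0$, i.e. to $\Ext_X^{>0}(\F,\F\otimes_X\omega_X^{-1})=0$. Assuming (1), the pair $(\Coh X,F)$ meets the hypotheses of Proposition \ref{SerreRI}, so $A$ is $d$-representation infinite; in particular $\nu_d^{-n}A\in\mod A$ for all $n\geq0$, which through the dictionary is precisely $\Ext_X^{>0}(\F,\F\otimes_X\omega_X^{-n})=0$, namely (3). As (3)$\Rightarrow$(2) is just the case $n=1$, the three conditions are equivalent, and this simultaneously yields the claimed $d$-representation infiniteness of $\End_X(\F)$.

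To bring in (4), I would first compute the self-$\Hom$s upstairs. As $\pi$ is affine and flat we have $R\pi_*\O_{Tot(\omega_X)}=\pi_*\O_{Tot(\omega_X)}=\bigoplus_{n\geq0}\omega_X^{-n}$, so adjunction ($L\pi^*=\pi^*$) together with the projection formula gives
\[\RHom_{Tot(\omega_X)}(\pi^*\F,\pi^*\F)\cong\RHom_X(\F,R\pi_*\pi^*\F)\cong\bigoplus_{n\geq0}\RHom_X(\F,\F\otimes_X\omega_X^{-n}).\]
Taking cohomology, $\pi^*\F$ is pretilting exactly when $\Ext_X^{>0}(\F,\F\otimes_X\omega_X^{-n})=0$ for all $n$, i.e. exactly under (3). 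Perfectness is automatic because $\pi^*$ preserves perfect complexes and $\F$ is perfect on the smooth $X$; generation is formal, since $\F$ is a compact generator of $\D_{qc}(X)$, the same adjunction identifies $\RHom_{Tot(\omega_X)}(\pi^*\F,-)$ with $\RHom_X(\F,R\pi_*-)$, and $\pi_*$ is conservative as $\pi$ is affine, so $\pi^*\F$ has trivial right orthogonal and thus generates $\per Tot(\omega_X)=\D^b(\Coh Tot(\omega_X))$. This establishes (3)$\Leftrightarrow$(4).

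Finally, under (3) each summand in the display is concentrated in degree $0$, so it collapses to an identification $\End_{Tot(\omega_X)}(\pi^*\F)\cong\bigoplus_{n\geq0}\Hom_X(\F,\F\otimes_X\omega_X^{-n})\cong\bigoplus_{n\geq0}\nu_d^{-n}A=\Pi_{d+1}(A)$ of the right underlying graded object. The genuine content, and where I expect the main obstacle, is matching the two multiplications. Geometrically, regarding $\pi^*\F$ as the free module $\F\otimes_{\O_X}\bigl(\bigoplus_n\omega_X^{-n}\bigr)$ over the graded $\O_X$-algebra $\pi_*\O_{Tot(\omega_X)}$, composition of endomorphisms is the twisted composition sending $f\in\Hom_X(\F,\F\otimes_X\omega_X^{-m})$ and $g\in\Hom_X(\F,\F\otimes_X\omega_X^{-n})$ to $(f\otimes_X\omega_X^{-n})\circ g$. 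Algebraically, $\Pi_{d+1}(A)$ is the tensor algebra on the inverse-dualizing bimodule $\nu_d^{-1}A$, whose product $\nu_d^{-m}A\otimes_A\nu_d^{-n}A\to\nu_d^{-(m+n)}A$, after the identification $\nu_d^{-n}A=\Hom_{\per A}(A,\nu_d^{-n}A)$, is the twisted composition $(f,g)\mapsto\nu_d^{-n}(f)\circ g$. Because $G$ intertwines $\nu_d$ with $-\otimes_X\omega_X$ and sends $A$ to $\F$, it carries the latter product to the former, so the two algebra structures coincide and the isomorphism $\End_{Tot(\omega_X)}(\pi^*\F)\cong\Pi_{d+1}(A)$ is multiplicative. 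The crux is thus the orbit-algebra description of the Calabi--Yau completion and its compatibility with $G$; the remaining steps are formal.
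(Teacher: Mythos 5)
Your proposal is correct, and its skeleton coincides with the paper's: the dictionary $H^i(\nu_d^{-n}A)\cong\Ext_X^i(\F,\F\otimes\omega_X^{-n})$ obtained from uniqueness of Serre functors, Lemma \ref{gldimd} for (1)$\Leftrightarrow$(2), and Proposition \ref{SerreRI} both for (3) and for the $d$-representation infiniteness of $\End_X(\F)$ are exactly the paper's ingredients. Where you genuinely diverge is (3)$\Leftrightarrow$(4) and the final algebra isomorphism: the paper simply cites \cite[3.19]{BF} for the former and writes the chain of graded identifications for the latter, whereas you prove both from scratch. Your replacement argument is sound: $\pi$ affine and flat gives $R\pi_*\pi^*\F\cong\F\otimes\bigoplus_{n\geq0}\omega_X^{-n}$ by the projection formula (the direct sum passes through $\RHom_X(\F,-)$ because $\F$ is compact), so pretilting upstairs is exactly condition (3); and generation follows since $\pi^*\F$ is perfect, $R\pi_*$ is conservative, and Neeman--Thomason identifies the thick closure of a compact generator with all of $\per Tot(\omega_X)=\D^b(\Coh Tot(\omega_X))$ ($Tot(\omega_X)$ being smooth). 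Your discussion of multiplicativity — matching the twisted composition on $\bigoplus_n\Hom_X(\F,\F\otimes\omega_X^{-n})$ with the orbit-algebra product on $\bigoplus_n\nu_d^{-n}A$, which is legitimate here because the paper's remark after the definition of $d$-representation infinite identifies $\Pi_{d+1}(A)$ with this orbit algebra when it is concentrated in degree $0$ — is a point the paper leaves implicit in its displayed chain of isomorphisms. The trade-off: the paper's proof is shorter but leans on external references, while yours is self-contained at the cost of invoking standard but heavier machinery (compact generation, conservativity of affine pushforward). Both, incidentally, implicitly use the standard fact that $\End_X(\F)$ has finite global dimension for any tilting sheaf on a smooth projective variety, which is needed before Lemma \ref{gldimd} or the Serre-functor transport can be applied; flagging that would tighten either argument.
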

\begin{proof}
For the convenience of the reader, we include a proof. (1)$\Leftrightarrow$(2)$\Leftrightarrow$(3) follows from Lemma \ref{gldimd}. For (3)$\Leftrightarrow$(4), see \cite[3.19]{BF}. Under these conditions, $A:=\End_X(\F)$ becomes $d$-representation infinite by Proposition \ref{SerreRI}. The last statement follows by
\[\End_{\Tot(\omega_X)}(\pi^*\F)\cong\bigoplus_{n\geq0}\Hom_X(\F,\F\otimes\omega_X^{-n})\cong\bigoplus_{n\geq0}\Hom_A(A,\nu_d^{-n}A)\cong\Pi_{d+1}(A).\qedhere\]
\end{proof}

\begin{Rem}
A tilting sheaf satisfying (2),(3) and (4) in Proposition \ref{dtiltsheaf} is called almost pullback \cite{Zha}, $d$-hereditary \cite{Cha} and pullback \cite{BF} respectively. This proposition says that these conditions are all equivalent to being a $d$-tilting sheaf. See also \cite[1.9]{Zha}.
\end{Rem}

\begin{Rem}\label{FanoNCCR}
Put $R:=\bigoplus_{n=0}^\infty\Gamma(X,\omega_X^{-n})$. Van den Bergh shows that if $X$ is Fano and $\E\in\vect X$ is a $d$-tilting bundle, then $\End_{\Tot(\omega_X)}(\pi^*\E)$ gives a non-commutative crepant resolution (NCCR) of $R$ \cite[7.2]{VdB04a}. Compare this to our Theorem \ref{NCCRsingdP}.
\end{Rem}

A typical example is given by Beilinson's tilting bundles.

\begin{Ex}
Put $X:=\mathbb{P}^d$. Then $\F:=\bigoplus_{i=0}^d\O_X(i)$ is a tilting bundle by \cite{Bei}. Here, we can easily check $\Ext_X^{>0}(\F,\F(d+1))=0$. Therefore the $d$-Beilinson algebra $\End_X(\F)$ becomes $d$-representation infinite. See also \cite[2.15]{HIO}.
\end{Ex}

Finally, we recall the definition of higher representation tame algebras.

\begin{Def}\cite[6.10]{HIO}
A $d$-representation infinite algebra $A$ is called {\it $d$-representation tame} if the $(d+1)$-Calabi--Yau algebra $\Pi:=\Pi_{d+1}(A)$ of $A$ is a noetherian algebra, i.e. $\Pi$ is a module-finite algebra over some commutative noetherian ring.
\end{Def}

This is a generalization of the path algebras of the extended Dynkin quivers. By Remark \ref{FanoNCCR}, if a $d$-dimensional smooth Fano variety has a $d$-tilting bundle, then its endomorphism algebra becomes a $d$-representation tame algebra. In Corollary \ref{RPifin}, we will see that this is also true for $d$-tilting bundles on arbitrary $d$-dimensional smooth proper varieties.

\section{Preliminaries}

\subsection{Exceptional sequences and universal (co)extensions}

The following statement is an immediate consequence of the celebrated result of Orlov \cite{Orl}.

\begin{Prop}\label{excep}\cite[2.2]{HP14}
Let $X$ be a smooth projective surface and $\pi\colon Y\to X$ the blowing up at a closed point $P\in X$. Put $E:=\pi^{-1}(P)\subseteq Y$. Assume that we have an exceptional collection $(\E_1,\cdots ,\E_n)$ on $\D^b(\Coh X)$ with $\E_1=\O_X$.
\begin{enumerate}
\item $(\pi^*\E_1=\O_Y,\O_Y(E),\pi^*\E_2\cdots ,\pi^*\E_n)$ is an exceptional collection on $\D^b(\Coh Y)$.
\item If $(\E_1,\cdots ,\E_n)$ is full, then so is $(\pi^*\E_1,\O_Y(E),\pi^*\E_2\cdots ,\pi^*\E_n)$.
\end{enumerate}
\end{Prop}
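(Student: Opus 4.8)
The statement asserts two things about the blow-up $\pi\colon Y\to X$ at a point $P$: that inserting $\mathcal{O}_Y(E)$ into the pullback of an exceptional collection (with $\E_1=\O_X$) yields again an exceptional collection, and that fullness is preserved. The plan is to deduce both from Orlov's semiorthogonal decomposition for blow-ups, which here takes the particularly simple form $\D^b(\Coh Y)=\langle \O_E(-1)\,(\text{or equivalently one object supported on }E),\ \mathbf{L}\pi^*\D^b(\Coh X)\rangle$, together with the standard facts that $\mathbf{L}\pi^*$ is fully faithful and that $\mathbf{R}\pi_*\O_Y\cong\O_X$, $\mathbf{R}\pi_*\mathbf{L}\pi^*(-)\cong\mathrm{id}$ (projection formula).

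First I would verify the three types of semiorthogonality conditions needed for an exceptional collection, namely (a) each object is exceptional, (b) the pullbacks $\pi^*\E_i$ form an exceptional collection among themselves, and (c) the new object $\O_Y(E)$ sits correctly relative to $\pi^*\E_1=\O_Y$ on its left and $\pi^*\E_2,\dots,\pi^*\E_n$ on its right. Part (b) is immediate from full faithfulness of $\mathbf{L}\pi^*$, since $\Hom_Y(\pi^*\E_i,\pi^*\E_j[m])\cong\Hom_X(\E_i,\E_j[m])$. For (a), exceptionality of $\O_Y(E)$ reduces to computing $\RHom_Y(\O_Y(E),\O_Y(E))\cong\RHom_Y(\O_Y,\O_Y)=\RG(Y,\O_Y)=k$, using $\mathbf{R}\pi_*\O_Y\cong\O_X$ and $H^{>0}(X,\O_X)=0$ on a rational-type surface—or more robustly by the local structure near $E\cong\mathbb{P}^1$ with $E^2=-1$.

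The crux is part (c): the vanishing of the Hom-groups in the forbidden directions. For the pair $(\O_Y,\O_Y(E))$ one must check $\Hom_Y(\O_Y(E),\O_Y[m])=0$ for all $m$, i.e. $\RG(Y,\O_Y(-E))=0$; this follows from the short exact sequence $0\to\O_Y(-E)\to\O_Y\to\O_E\to0$ together with $\RG(E,\O_E)=k=\RG(Y,\O_Y)$, so the connecting map is an isomorphism and the cohomology of $\O_Y(-E)$ vanishes. For the pairs $(\O_Y(E),\pi^*\E_j)$ with $j\geq2$, I would compute $\RHom_Y(\pi^*\E_j,\O_Y(E))$ via adjunction and the projection formula: $\mathbf{R}\pi_*(\O_Y(E)\otimes\mathbf{L}\pi^*\E_j^\vee)\cong\E_j^\vee\otimes\mathbf{R}\pi_*\O_Y(E)$, and then show $\mathbf{R}\pi_*\O_Y(E)\cong\O_X$ (again from $0\to\O_Y\to\O_Y(E)\to\O_E(E)\to0$ with $\O_E(E)\cong\O_{\mathbb{P}^1}(-1)$ having vanishing cohomology). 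This reduces the required Hom-vanishing to $\RHom_X(\E_j,\O_X)$, and the hypothesis $\E_1=\O_X$ together with the exceptionality of the original collection (which gives $\Hom_X(\E_j,\E_1[m])=0$ for $j\geq2$, all $m$) finishes it. I expect this projection-formula computation of $\mathbf{R}\pi_*\O_Y(E)$ and the careful bookkeeping of which adjunction to apply to be the main obstacle, since the ordering of the collection is exactly engineered so that $\O_Y(E)$ lands to the right of $\O_X$ but to the left of the remaining $\pi^*\E_j$.

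Finally, for the fullness claim (2), I would argue that $\thick$ of the proposed collection contains $\mathbf{L}\pi^*\D^b(\Coh X)$ (since it contains all $\pi^*\E_i$ and these generate the image of $\mathbf{L}\pi^*$ by fullness of the original collection) and also contains an object of the form $\O_E(j)$ generating the complementary piece of Orlov's decomposition. Concretely, from $\O_Y$, $\O_Y(E)$ and the sequence $0\to\O_Y\to\O_Y(E)\to\O_E(E)\to0$ one obtains $\O_E(-1)$ inside the thick subcategory, and together with $\mathbf{L}\pi^*\D^b(\Coh X)$ this exhausts $\D^b(\Coh Y)$ by Orlov's theorem. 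Hence the collection is full.
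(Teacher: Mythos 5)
Your proposal is correct and follows essentially the same route as the paper: Orlov's blow-up decomposition together with the two short exact sequences $0\to\O_Y(-E)\to\O_Y\to\O_E\to0$ and $0\to\O_Y\to\O_Y(E)\to\O_E(E)\to0$, where your projection-formula computation of $\RHom_Y(\pi^*\E_j,\O_Y(E))$ is equivalent to the paper's restriction-to-$E$ argument, and your direct generation argument for fullness replaces the paper's mutation of $\O_E(E)$ past $\O_Y$ (a left $(\thick\O_Y)$-approximation). One small correction: the vanishing $H^{>0}(X,\O_X)=0$ needs no rationality assumption on $X$ --- it is precisely the exceptionality of $\E_1=\O_X$, which is part of the hypothesis.
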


Next, we recall \cite[4.1]{HP14} which is useful for constructing tilting bundles on rational surfaces combining with Proposition \ref{excep}. This is a consequence of operations called universal (co)extensions introduced by \cite{HP14}.

\begin{Prop}\cite[4.1]{HP14}\label{uniext}
Let $\T$ be a Hom-finite triangulated category having a full exceptional collection $(E_1,\cdots,E_n)$ with $\T(E_i,E_j[>\!1])=0$ for each $i,j$. Then there exists $E_i'\in\Filt\{E_i,E_{i+1},\cdots,E_n\}$ with $E_n'=E_n$ such that $\bigoplus_{i=1}^nE_i'\in\T$ is a tilting object.
\end{Prop}


\subsection{Slope functions on weak del Pezzo surfaces}

Throughout this subsection, let $X$ denote a weak del Pezzo surface. We use the following fact which holds in arbitrary characteristic.

\begin{Prop}\label{irrmem}\cite[2.12(2)]{BT}
Let $X$ be a weak del Pezzo surface. Then the linear system $|-K_X|$ has an irreducible reduced member.
\end{Prop}
In what follows, we pick and fix an irreducible reduced member $C\in|-K_X|$. Then by the adjunction formula, $C$ has arithmetic genus one.

\begin{Rem}
If $\ch k=0$, then it is well-known that we can take a smooth $C$. However, such $C$ does not always exist in positive characteristic (see \cite{KN}).
\end{Rem}

\begin{Lem}\label{pretilt}
Let $\E\in\vect X$ be a pretilting bundle. Then we have $\End_X(\E)\cong\End_C(\E|_C)$.
\end{Lem}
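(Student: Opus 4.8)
The plan is to realize the comparison as the restriction map $\End_X(\E)\to\End_C(\E|_C)$ and to prove it is bijective by a cohomology computation on the anticanonical curve $C$. First I would invoke the defining short exact sequence of $C$. Since $C\in|-K_X|$ we have $\O_X(-C)\cong\omega_X$, so there is a short exact sequence
\[0\to\omega_X\to\O_X\to\O_C\to0.\]
Because $\E$ is a vector bundle, $\sheafHom_X(\E,\E)$ is locally free, and tensoring by it preserves exactness:
\[0\to\sheafHom_X(\E,\E)\otimes\omega_X\to\sheafHom_X(\E,\E)\to\sheafHom_X(\E,\E)|_C\to0.\]
Passing to global sections, the middle term gives $\End_X(\E)$, while $\sheafHom_X(\E,\E)|_C\cong\sheafHom_C(\E|_C,\E|_C)$ has global sections $\End_C(\E|_C)$; the induced map $H^0(X,\sheafHom_X(\E,\E))\to H^0(C,\sheafHom_X(\E,\E)|_C)$ is precisely the restriction-of-endomorphisms homomorphism.

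Next I would show this map is a $k$-linear isomorphism by killing the two flanking cohomology groups of $\sheafHom_X(\E,\E)\otimes\omega_X$. Writing $H^i(X,\sheafHom_X(\E,\E)\otimes\omega_X)=\Ext^i_X(\E,\E\otimes\omega_X)$ and applying Serre duality on the surface $X$ gives
\[\Ext^i_X(\E,\E\otimes\omega_X)\cong D\Ext^{2-i}_X(\E,\E).\]
Since $\E$ is pretilting, $\Ext^{>0}_X(\E,\E)=0$, so the $i=0$ term is dual to $\Ext^2_X(\E,\E)=0$ and the $i=1$ term is dual to $\Ext^1_X(\E,\E)=0$. Hence $H^0$ and $H^1$ of $\sheafHom_X(\E,\E)\otimes\omega_X$ both vanish, and the long exact cohomology sequence forces the restriction map to be an isomorphism.

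There is no serious obstacle here: the content is entirely the choice of the anticanonical sequence together with Serre duality and the pretilting hypothesis, and the weak del Pezzo assumption enters only through the existence of the smooth anticanonical member $C$ fixed above. The one point deserving explicit mention is that the cohomological restriction map genuinely coincides with the algebra map $\phi\mapsto\phi|_C$, so that the $k$-linear bijection upgrades to an isomorphism of $k$-algebras; this is immediate from the functoriality of restriction but should be recorded to conclude $\End_X(\E)\cong\End_C(\E|_C)$.
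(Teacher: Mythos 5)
Your proposal is correct and follows essentially the same route as the paper: both tensor the anticanonical sequence $0\to\omega_X\to\O_X\to\O_C\to0$ with the (locally free) endomorphism data, take the long exact sequence, and kill the two flanking terms $\Ext^i_X(\E,\E\otimes\omega_X)\cong D\Ext^{2-i}_X(\E,\E)$ for $i=0,1$ via Serre duality and the pretilting hypothesis. Your closing remark that the cohomological restriction map agrees with the algebra homomorphism $\phi\mapsto\phi|_C$ is a point the paper leaves implicit, but otherwise the two arguments coincide.
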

\begin{proof}
The short exact sequence
\[0\to\O_X(K_X)\to\O_X\to\O_C\to0\]
gives rise to an exact sequence
\[\Hom_X(\E,\E(K_X))\to\Hom_X(\E,\E)\to\Hom_X(\E,\E|_C)\to\Ext^1_X(\E,\E(K_X)).\]
Here, for $i=0,1$, we have
\[\Ext^i_X(\E,\E(K_X))\cong D\Ext^{2-i}_X(\E,\E)=0.\]
Since $\Hom_X(\E,\E|_C)\cong\Hom_C(\E|_C,\E|_C)$ holds, we get the desired isomorphism.
\end{proof}

For non-zero torsion-free sheaves $\F\in\Coh X$ and a nef and big divisor $D\in\Div X$, we consider the slope function
\[\mu_D(\F):=\frac{(c_1(\F)\cdot D)}{\rk(\F)}.\]
Similarly, for non-zero torsion-free sheaves $\F\in\Coh C$, we consider the slope function
\[\mu_C(\F):=\frac{\deg\F}{\rk(\F)}.\]
Remark that $\mu_{-K_X}(\E)=\mu_C(\E|_C)$ holds for $\E\in\vect X$. Here, we see that \cite[2.2.9]{Kul} holds in arbitrary characteristic and we include a complete proof for the convenience of the reader.

\begin{Prop}\label{stable}
For a pretilting bundle $\E\in\vect X$, the following conditions are equivalent.
\begin{enumerate}
\item $\E$ is $\mu_{-K_X}$-stable.
\item $\E$ is exceptional.
\item $\E|_C$ is $\mu_C$-stable.
\item $\E|_C$ is a brick.
\end{enumerate}
\end{Prop}
\begin{proof}
(1)$\Rightarrow$(2) and (3)$\Rightarrow$(4) are well-known. (4)$\Rightarrow$(3) is due to \cite[2.5(3)]{Dro}. By Lemma \ref{pretilt}, we have (2)$\Leftrightarrow$(4). In what follows, we prove (3)$\Rightarrow$(1).

Take a non-trivial epimorphism $f\colon\E\to\F$ where $\F\in\Coh X$ is a torsion-free sheaf. Then $\E':=\Ker f$ becomes locally free. We will prove $\mu_{-K_X}(\E)<\mu_{-K_X}(\F)$, or equivalently $\mu_{-K_X}(\E')<\mu_{-K_X}(\E)$. Let $U\subseteq X$ be a locally free locus of $\F$. Then the sequence $0\to\E'|_{U\cap C}\to\E|_{U\cap C}\to\F|_{U\cap C}\to0$ is exact. Here, $U$ must intersect with $C$ since $\dim(X\setminus U)=0$. Thus the morphism $\E'|_C\to\E|_C$ is injective at the generic point of $C$, which implies it is injective. Since $\E|_C$ is $\mu_C$-stable, we have
\[\mu_{-K_X}(\E')=\mu_C(\E'|_C)<\mu_C(\E|_C)=\mu_{-K_X}(\E).\qedhere\]
\end{proof}

For later use, we introduce the following category. For $\mu\in\mathbb{Q}$, let
\[\C_\mu:=\Filt\{\E\in\vect X\colon\text{exceptional bundle with }\mu_{-K_X}(\E)=\mu\}\subseteq\vect X\]
be the full subcategory of $\vect X$ consisting of vector bundles admitting a filtration whose factors are exceptional bundles of slope $\mu$. Then the following is an immediate corollary of Proposition \ref{stable}.

\begin{Cor}\label{extsemist}
Take $\mu\in\mathbb{Q}$ and $\E\in\C_\mu$.
\begin{enumerate}
\item $\E$ is $\mu_{-K_X}$-semistable with $\mu_{-K_X}(\E)=\mu$.
\item $\E|_C$ is $\mu_C$-semistable with $\mu_C(\E|_C)=\mu$.
\end{enumerate}
\end{Cor}

In Appendix, we will see that for any indecomposable rigid torsion-free sheaf $\E\in\Coh X$ with $\mu=\mu_{-K_X}(\E)$, we have $\E\in\C_\mu$ (Theorem \ref{rigtorfss}).

The following lemma will be used repeatedly and can be proved in the same way as \cite[3.7]{KO} by using Corollary \ref{extsemist}(2).

\begin{Lem}\label{ext1}
Let $\mu_1, \mu_2\in\mathbb{Q}$ be rational numbers with $\mu_1<\mu_2$. Take $\E_i\in\C_{\mu_i}$ for $i=1,2$. If $\Ext_X^1(\E_2,\E_1)=0$ holds, then we have $\Ext_X^1(\E_1,\E_2)=0$.
\end{Lem}
\begin{proof}
Although it is completely parallel to \cite[3.7]{KO}, we give a proof for the convenience of the reader.

Consider the short exact sequence
\[0\to\E_1(K_X)\to\E_1\to\E_1|_C\to0\]
from which we obtain an exact sequence
\[\Hom_X(\E_2,\E_1|_C)\to\Ext_X^1(\E_2,\E_1(K_X))\to\Ext_X^1(\E_2,\E_1).\]
From our assumption, we have $\Ext_X^1(\E_2,\E_1)=0$. Since $\mu_C(\E_1|_C)=\mu_1<\mu_2=\mu_C(\E_2|_C)$ holds and $\E_1|_C,\E_2|_C\in\vect C$ are $\mu_C$-semistable by Corollary \ref{extsemist}(2), we have
\[\Hom_X(\E_2,\E_1|_C)\cong\Hom_C(\E_2|_C,\E_1|_C)=0.\]
Therefore we obtain
\[\Ext_X^1(\E_1,\E_2)\cong D\Ext_X^1(\E_2,\E_1(K_X))=0.\qedhere\]
\end{proof}

Finally, in the same way as \cite[Lemma 6.3]{KO}, we can prove the following.

\begin{Prop}\label{mut}
Let $(\E_1,\E_2)$ be an exceptional pair consisting of vector bundles on $X$ with $\mu_{-K_X}(\E_1)>\mu_{-K_X}(\E_2)$.
\begin{enumerate}
\item $L_{\E_1}\E_2$ is a vector bundle with $\mu_{-K_X}(\E_2)<\mu_{-K_X}(L_{\E_1}\E_2)<\mu_{-K_X}(\E_1)$.
\item $R_{\E_2}\E_1$ is a vector bundle with $\mu_{-K_X}(\E_2)<\mu_{-K_X}(R_{\E_2}\E_1)<\mu_{-K_X}(\E_1)$.
\end{enumerate}
\end{Prop}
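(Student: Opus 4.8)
The plan is to reduce the statement to understanding the single complex $\RHom_X(\E_1,\E_2)$ and to compute it by transferring it to the fixed anticanonical elliptic curve $C\in|-K_X|$, where semistability on a curve makes the answer transparent. As a preliminary, since $\E_1,\E_2$ are exceptional they are pretilting with $\End_X(\E_i)=k$, so Lemma \ref{pretilt} gives $\End_C(\E_i|_C)=k$; hence $\E_i|_C$ is indecomposable, and an indecomposable bundle on an elliptic curve is semistable, of slope $\mu_C(\E_i|_C)=\deg_C(\E_i|_C)/r(\E_i)=\mu(\E_i)$. I only write out the left mutation; the right mutation is entirely symmetric.

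The key step is to establish
\[\RHom_X(\E_1,\E_2)\cong\RHom_C(\E_1|_C,\E_2|_C).\]
Applying $\RHom_X(\E_1,-)$ to the sequence $0\to\E_2(K_X)\to\E_2\to\E_2|_C\to0$ coming from $0\to\O_X(K_X)\to\O_X\to\O_C\to0$ yields a triangle
\[\RHom_X(\E_1,\E_2(K_X))\to\RHom_X(\E_1,\E_2)\to\RHom_C(\E_1|_C,\E_2|_C)\xrightarrow{+1}\]
By Serre duality on $X$ one has $\RHom_X(\E_1,\E_2(K_X))\cong D\RHom_X(\E_2,\E_1)[-2]$, and this vanishes because $(\E_1,\E_2)$ is an exceptional pair, so the middle identification follows.

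On the elliptic curve $C$ the groups $\RHom_C(\E_1|_C,\E_2|_C)$ live only in degrees $0$ and $1$. Since $\mu_C(\E_1|_C)>\mu_C(\E_2|_C)$ and both restrictions are semistable, the slope argument gives $\Hom_C(\E_1|_C,\E_2|_C)=0$, so $\RHom_X(\E_1,\E_2)$ is concentrated in degree $1$; in particular $\Hom_X(\E_1,\E_2)=\Ext_X^2(\E_1,\E_2)=0$. Moreover Riemann--Roch on $C$ computes
\[\chi_C(\E_1|_C,\E_2|_C)=r(\E_1)\,r(\E_2)\bigl(\mu(\E_2)-\mu(\E_1)\bigr)<0,\]
so $\Ext_X^1(\E_1,\E_2)\cong\Ext_C^1(\E_1|_C,\E_2|_C)\neq0$.

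With $\RHom_X(\E_1,\E_2)$ concentrated in the single degree $1$ and nonzero there, the mutation $L_{\E_1}\E_2$ is realized (up to the standard shift that turns mutations of sheaves into sheaves) as the universal extension sitting in
\[0\to\E_2\to L_{\E_1}\E_2\to\Ext_X^1(\E_1,\E_2)\otimes\E_1\to0,\]
an extension of vector bundles, hence itself a vector bundle. Writing $e:=\dim\Ext_X^1(\E_1,\E_2)\geq1$, its slope is the weighted mediant
\[\mu(L_{\E_1}\E_2)=\frac{r(\E_2)\,\mu(\E_2)+e\,r(\E_1)\,\mu(\E_1)}{r(\E_2)+e\,r(\E_1)},\]
a convex combination with positive weights, which therefore lies strictly between $\mu(\E_2)$ and $\mu(\E_1)$. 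The right mutation is handled identically using the dual coextension $0\to\Ext_X^1(\E_1,\E_2)^*\otimes\E_2\to R_{\E_2}\E_1\to\E_1\to0$. I expect the main obstacle to be the second and third steps: converting the global vanishing on $X$ (Serre duality together with the exceptional-pair hypothesis) into the curve computation on $C$, since it is exactly there that one must check $\RHom_X(\E_1,\E_2)$ is concentrated in degree $1$ and, crucially, nonzero there, which is what forces the slope inequalities to be strict. Once this is in place, local freeness of the extension and the mediant estimate are routine.
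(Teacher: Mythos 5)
Your proof is correct and complete, which is more than the paper itself offers: the paper's entire proof of Proposition \ref{mut} is the remark that one argues ``in the same way as'' \cite[Lemma 6.3]{KO}, leaning on the stability result Proposition \ref{stable}(2). That route runs: stability plus $\mu(\E_1)>\mu(\E_2)$ kills $\Hom_X(\E_1,\E_2)$; Serre duality plus the exceptional-pair vanishing $\RHom_X(\E_2,\E_1)=0$ kills $\Ext^2_X(\E_1,\E_2)$; Riemann--Roch on $X$ (as in Proposition \ref{big}, using $\chi(\E_2,\E_1)=0$) forces $\Ext^1_X(\E_1,\E_2)\neq 0$; then the mutation is the universal (co)extension and the mediant slope formula finishes. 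You reach the same three facts by a structurally different device: you upgrade Lemma \ref{pretilt} from endomorphism rings to the whole Hom-complex, establishing $\RHom_X(\E_1,\E_2)\cong\RHom_C(\E_1|_C,\E_2|_C)$ from the restriction triangle plus Serre duality (which uses exactly $\RHom_X(\E_2,\E_1)=0$), and then the vanishing in degrees $0$ and $2$ and the non-vanishing in degree $1$ all drop out of semistability and Riemann--Roch on the elliptic curve $C$. The gain is that you never invoke stability on $X$ at all --- only indecomposability of restrictions via Lemma \ref{pretilt} and Atiyah's semistability on $C$ --- and the three separate duality arguments of the stability route collapse into a single isomorphism; this is self-contained within the paper's toolkit and arguably cleaner. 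The endgame (the universal extension is locally free, its slope is a weighted mediant with weight $e=\dim\Ext^1_X(\E_1,\E_2)\geq1$, whence strictness on both sides) coincides with the standard argument, and your parenthetical about the shift convention turning mutations of sheaves into sheaves is the right caveat to flag, since the statement of the proposition implicitly uses that convention.
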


\section{Main results}

\subsection{Having a $d$-tilting bundle implies weak Fano}

We prove the following main result in this subsection.

\begin{Thm}\label{nece}
Let $X$ be a $d$-dimensional smooth proper variety having a $d$-tilting bundle $\E\in\vect X$.
\begin{enumerate}
\item $\omega_X^{-1}$ is semiample and big. In particular, $X$ is weak Fano.
\item For $i>0$ and $n\ge0$, we have
\[H^i(X,\omega_X^{-n})=0.\]
\end{enumerate}
\end{Thm}

Remark that when $\ch k=0$, it is enough to prove that $\omega_X^{-1}$ is nef and big. When $\ch k>0$, the remaining conditions are not automatic. We also remark that we use $\omega_X^{-1}$ is semiample to deduce it is big.

Throughout this subsection, we use the notation in Theorem \ref{nece} and further fix the following notation. Let $T:=\Tot_X(\omega_X)$ and $\pi\colon T\to X$ the natural projection. Remark that $\pi^*\E\in\vect T$ is a tilting bundle by Proposition \ref{dtiltsheaf}. Let
\[R:=\Gamma(T,\O_T)=\bigoplus_{n\ge0}\Gamma(X,\omega_X^{-n})\]
be the anti-canonical ring of $X$ and
\[\Pi:=\End_T(\pi^*\E)=\bigoplus_{n\ge0}\Hom_X(\E,\E\otimes\omega_X^{-n})\]
the $(d+1)$-Calabi--Yau completion of the $d$-representation infinite algebra $\Pi_0=\End_X(\E)$. Remark that we have a triangle equivalence
\[\RHom_T(\pi^*\E,-)\colon\D^b(\Coh T)\xrightarrow[\simeq]{}\per\Pi.\]

\begin{Prop}\label{prfsemiample}
$\omega_X^{-1}$ is semiample.
\end{Prop}
\begin{proof}
Suppose that $\omega_X^{-1}$ is not semiample. Then the closed subset
\[\mathbf{B}(\omega_X^{-1}):=\bigcap_{n>0}{\rm Bs}|\omega_X^{-n}|\subseteq X\]
is not empty. Take a closed point $x\in\mathbf{B}(\omega_X^{-1})$. Let 
\[C_x:=\pi^{-1}(x)\subseteq T,\]
which is isomorphic to $\mathbb{A}^1_{\kappa(x)}$. Consider the right $\Pi$-module
\[M:=\Hom_T(\pi^*\E,\O_{C_x}).\]
Since
\[\Ext_T^{>0}(\pi^*\E,\O_{C_x})\cong\Ext_{C_x}^{>0}(\pi^*\E|_{C_x},\O_{C_x})\cong\Ext_{C_x}^{>0}(\O_{C_x}^r,\O_{C_x})=0\]
holds where $r:=\rk\E$, we have
\[M\cong\RHom_T(\pi^*\E,\O_{C_x})\in\per\Pi.\]
In particular, $M$ is finitely generated as a $\Pi$-module.

Take $u\in\Pi_n$ where $n\ge0$. Choose a trivialization $\omega_{X,x}^{-1}\cong\kappa(x)t$. Then the restriction $u|_{C_x}\in\End_{C_x}(\pi|_{C_x}^*\E_x)$ has the form
\[u|_{C_x}=t^nu_x,\quad u_x\in\End_{\kappa(x)}(\E_x).\]
If $n>0$, then $u_x$ is nilpotent. To see this, regard u as a morphism
\[u\colon\E\to\E\otimes\omega_X^{-n}.\]
For $1\le j\le r$, the $j$-th coefficient of the characteristic polynomial of $u$ is a global section
\[\sigma_j(u)\in H^0(X,\omega_X^{-nj}).\]
Since $x\in\mathbf{B}(\omega_X^{-1})$, every section of $\omega_X^{-nj}$ vanishes at $x$. Therefore all coefficients of the characteristic polynomial of $u_x$ vanish, and hence $u_x$ is nilpotent.

For $n\ge0$, put
\[V_n:=\{u_x\mid u\in\Pi_n\}\subseteq\End_{\kappa(x)}(\E_x).\]
Then $V_nV_m\subseteq V_{n+m}$ holds for $n,m\ge0$. Thus
\[S:=\bigcup_{n>0}V_n\subseteq\End_{\kappa(x)}(\E_x)\]
is a multiplicative subset consisting of nilpotent matrices. According to Levitzki's theorem (\cite[2.1.7]{RR00}), there exists $N>0$ such that every product of $N$ elements of $S$ is zero.
Since $\Pi$ is a tensor algebra $T_{\Pi_0}\Ext^d_{\Pi_0}(D\Pi_0,\Pi_0)$, it is generated over $\Pi_0$ by its degree-one part. Hence every element of $V_n$ is a finite sum of products of $n$ elements of $V_1$. Thus we obtain
$V_n=0\quad(n\ge N).$
Consequently, the action of $\Pi$ on
\[M\cong\Hom_{C_x}(\pi^*\E|_{C_x},\O_{C_x})\]
factors through $\Pi\twoheadrightarrow\Pi/\Pi_{\ge N}$. Since $M$ is a finitely generated $\Pi/\Pi_{\ge N}$-module, which is a finite dimensional algebra, we can conclude that $M$ is finite dimensional. This contradicts
\[M\cong\kappa(x)[t]^r.\qedhere\]
\end{proof}

We record the following immediate corollary.

\begin{Cor}\label{RPifin}
\begin{enumerate}
\item $R$ is a finitely generated algebra over $R_0=k$.
\item $\Pi$ is a module-finite $R$-algebra. In particular, the $d$-representation infinite algebra $\Pi_0=\End_X(\E)$ is $d$-representation tame.
\end{enumerate}
\end{Cor}
\begin{proof}
Although it is routine, we include a proof for the convenience of the reader. By Proposition \ref{prfsemiample}, there exists $m>0$ such that $\omega_X^{-m}$ is globally generated. Let $f\colon X\to Y$ be a proper surjective morphism to a projective variety $Y$ determined by $|\omega_X^{-m}|$. Then the section ring
\[S:=\bigoplus_{n\ge0}\Gamma(Y,\O_Y(n))\]
is finitely generated over $k=S_0$. For $\F\in\Coh X$, define
\[M(\F):=\bigoplus_{n\ge0}\Gamma(X,\F\otimes\omega_X^{-mn}).\]
By the projection formula, we have
\[\Gamma(X,\F\otimes\omega_X^{-mn})\cong\Gamma(Y,f_*(\F\otimes\omega_X^{-mn}))\cong\Gamma(Y,f_*\F(n)).\]
Here, remark that $f^*\O_Y(1)\cong\omega_X^{-m}$. Thus by Serre's theorem,
\[M(\F)\cong\bigoplus_{n\ge0}\Gamma(Y,f_*\F(n))\]
is finitely generated as an $S$-module. Observe that as an $S$-module, we have
\[R\cong\bigoplus_{i=0}^{m-1}M(\omega_X^{-i})\quad\text{and}\quad\Pi\cong\bigoplus_{i=0}^{m-1}M(\sheafEnd_X(\E)\otimes\omega_X^{-i}).\]
This proves the claim.
\end{proof}

Next, we show the vanishing of cohomology groups. Remark that this result is immediate when $\ch k=0$ since $\frac{1}{\rk\E}$ times the trace morphism $\sheafEnd_X(\E)\to\O_X$ gives a retraction of the scalar inclusion $\O_X\hookrightarrow\End_X(\E)$ (see \cite[4.5]{Cha}).

\begin{Prop}\label{antcanvan}
For $i>0$ and $n\ge0$, we have
\[H^i(X,\omega_X^{-n})=0.\]
\end{Prop}
\begin{proof}
Since $\E$ is a tilting bundle of $\D^b(\Coh X)$, there exist $\E_1,\E_2\in\add\E$ and $a_1,a_2\ge0$ such that
\[[\O_X]=a_1[\E_1]-a_2[\E_2]\]
holds in the Grothendieck group $K_0(X)$. Taking the rank, we have $1=a_1\rk(\E_1)-a_2\rk(\E_2)$.

Consider $\E':=\E_1\oplus\E_2\in\add\E$. Take idempotents $e_i:=(\E'\twoheadrightarrow\E_i\hookrightarrow\E')\in\End_X(\E')$ for $i=1,2$ and put $\theta:=a_1e_1-a_2e_2\in\End_X(\E')$. Then
\begin{equation}\label{tracetheta}
    \tr(\theta)=a_1\rk(\E_1)-a_2\rk(\E_2)=1
\end{equation}
holds in $\Gamma(X,\O_X)$. Consider the morphism
\[\sheafEnd_X(\E')\to\O_X;f\mapsto\tr(f\theta).\]
This gives a retraction of the scalar inclusion $\O_X\hookrightarrow\sheafEnd_X(\E')$ by \eqref{tracetheta}. Thus $\omega_X^{-n}$ is a direct summand of $\sheafEnd_X(\E')\otimes\omega_X^{-n}\cong\sheafHom_X(\E',\E'\otimes\omega_X^{-n}).$ Here, by Proposition \ref{dtiltsheaf}, we have
\[H^i(X,\sheafHom_X(\E',\E'\otimes\omega_X^{-n}))\cong\Ext_X^i(\E',\E'\otimes\omega_X^{-n})=0.\]
This gives the desired vanishing.
\end{proof}
\begin{Rem}
    This proof also shows the following general statement: if a noetherian integral scheme $Y$ has a tilting bundle, then $H^{>0}(Y,\O_Y)=0$ holds.
\end{Rem}

Using Proposition \ref{prfsemiample} and \ref{antcanvan}, we can prove that $\omega_X^{-1}$ is big.

\begin{Prop}\label{prfbig}
$\omega_X^{-1}$ is big.
\end{Prop}
\begin{proof}
By Proposition \ref{prfsemiample}, there exists $m>0$ such that $\omega_X^{-m}$ is globally generated. Let $f\colon X\to Y$ be a proper surjective morphism to a projective variety $Y$ determined by $|\omega_X^{-m}|$. It is enough to show $\dim Y=d$. Remark that $f^*\O_Y(1)\cong\omega_X^{-m}$ holds.

First, we prove $R^{>0}f_*\omega_X=0$. Take $q>0$. Since $\O_Y(1)$ is ample, there exists $n>0$ such that
\[H^{>0}(Y,R^if_*\omega_X(n))=0\]
holds for $0\le i\le q$. Observe that by the projection formula, we have $R^if_*\omega_X(n)\cong R^if_*\omega_X^{-nm+1}$. Thus by the Leray spectral sequence, we obtain
\[H^0(Y,R^qf_*\omega_X(n))\cong H^0(Y,R^qf_*\omega_X^{-nm+1})\cong H^q(X,\omega_X^{-nm+1})=0\]
by Proposition \ref{antcanvan}. Since $\O_Y(1)$ is ample, this implies $R^qf_*\omega_X=0$.

Let $\eta\in Y$ be the generic point and $X_\eta:=f^{-1}(\eta)\subseteq X$ the generic fiber. Then $X_\eta$ is a regular proper variety over $K(Y)$ of dimension $s:=d-\dim Y$. Remark that up to tensoring with a one-dimensional $K(Y)$-vector space, $\omega_X|_{X_\eta}$ is the dualizing sheaf of $X_\eta$. Suppose $s>0$. Then we have
\[\Hom_{K(Y)}(H^0(X_\eta,\O_{X_\eta}),K(Y))\cong H^s(X_\eta,\omega_{X_\eta})\cong H^s(X_\eta,\omega_X|_{X_\eta})\cong(R^sf_*\omega_X)_\eta=0,\]
which is a contradiction. Therefore we obtain $\dim Y=d$.
\end{proof}

\begin{Rem}
In the proof of Proposition \ref{prfbig}, we can prove $H^0(Y,R^qf_*\omega_X^{-nm+1})=0$ without using spectral sequences in the following way. Put $\L:=\omega_X^{-nm+1}$. Observe that by Proposition \ref{antcanvan}, we have
\[\mathbb{R}\Gamma(Y,\mathbb{R}f_*\L)\cong\mathbb{R}\Gamma(X,\L)\cong\Gamma(X,\L).\]
Consider the exact triangle
\[f_*\L\to\mathbb{R}f_*\L\to \tau^{>0}\mathbb{R}f_*\L\dashrightarrow.\]
Since $H^{>0}(Y,f_*\L)=0$,  we obtain $\mathbb{R}\Gamma(Y,\tau^{>0}\mathbb{R}f_*\L)=0$.  Consider the exact triangle
\[R^1f_*\L[-1]\to \tau^{>0}\mathbb{R}f_*\L\to\tau^{>1}\mathbb{R}f_*\L\dashrightarrow.\]
This together with $\mathbb{R}\Gamma(Y,\tau^{>0}\mathbb{R}f_*\L)=0$ implies
\[\mathbb{R}\Gamma(Y,\tau^{>1}\mathbb{R}f_*\L)\cong\mathbb{R}\Gamma(Y,R^1f_*\L).\]
Since $H^{>0}(Y,R^1f_*\L)=0$,  we obtain $\mathbb{R}\Gamma(Y,\tau^{>1}\mathbb{R}f_*\L)=0=\mathbb{R}\Gamma(Y,R^1f_*\L)$. By iterating this process, we obtain $\mathbb{R}\Gamma(Y,R^qf_*\L)$.
\end{Rem}

Combining the above arguments, we can complete the proof of Theorem \ref{nece}.

\begin{proof}[Proof of Theorem \ref{nece}]
This follows by Proposition \ref{prfsemiample}, \ref{antcanvan} and \ref{prfbig}.
\end{proof}

We record an immediate corollary of Theorem \ref{nece}. This gives a partial solution to the conjecture that a smooth projective surface has a tilting bundle if and only if it is rational.

\begin{Cor}\label{rat}
Let $X$ be a smooth projective surface having a $2$-tilting bundle. Then $X$ is rational.
\end{Cor}

\begin{Rem}
Remark that the proof of Theorem \ref{nece} works for regular proper varieties over arbitrary fields.
\end{Rem}

\subsection{Weak del Pezzo surfaces have $2$-tilting bundles}

The goal of this subsection is to prove the following main theorem of this paper.

\begin{Thm}\label{suff}
Let $X$ be a weak del Pezzo surface. Then $X$ admits a $2$-tilting bundle $\E\in\vect X$. Moreover, we can take $\E$ so that it contains $\O_X$ as a direct summand.
\end{Thm}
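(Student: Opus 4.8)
The plan is to separate the problem into a reduction and a construction. For the reduction, suppose we have produced a tilting bundle $\E$ on $X$ containing $\O_X$ as a summand; by Proposition \ref{dtiltsheaf} it is $2$-tilting precisely when $\Ext^{>0}_X(\E,\E(-K_X))=0$. Tensoring $0\to\O_X(K_X)\to\O_X\to\O_C\to0$ with $\sheafHom_X(\E,\E(-K_X))$ gives
\[0\to\sheafHom_X(\E,\E)\to\sheafHom_X(\E,\E(-K_X))\to\sheafHom_X(\E,\E(-K_X))|_C\to0,\]
the twist by $\O_X(K_X)$ cancelling $\omega_X^{-1}$. Since $\E$ is tilting, $\Ext^{>0}_X(\E,\E)=0$, so the associated long exact sequence forces $\Ext^2_X(\E,\E(-K_X))=0$ for free (as $\dim C=1$) and yields
\[\Ext^1_X(\E,\E(-K_X))\cong H^1(C,\End_C(\E|_C)\otimes\O_C(-K_X)).\]
Each indecomposable summand $\E_i$ of $\E$ is indecomposable pretilting, hence semistable by Proposition \ref{stable}, and by Lemma \ref{pretilt} its restriction $\E_i|_C$ is an indecomposable, hence semistable, bundle on the elliptic curve with $\mu_C(\E_i|_C)=\mu(\E_i)$. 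Writing $\End_C(\E|_C)\otimes\O_C(-K_X)$ as a sum of the bundles $\sheafHom_C(\E_i|_C,\E_j|_C)\otimes\O_C(-K_X)$ and applying Serre duality on $C$, the $H^1$ of each summand is dual to $\Hom_C(\E_j|_C,\E_i|_C\otimes\O_C(K_X))$, which vanishes by semistability once $\mu(\E_i)-\mu(\E_j)<K_X^2=\deg\O_C(-K_X)$. So it suffices to build $\E$ with all indecomposable summands having slopes in an open interval of length $K_X^2$.

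For the construction, I would exploit that every weak del Pezzo surface is obtained from a minimal one ($\mathbb{P}^2$, $\mathbb{P}^1\times\mathbb{P}^1$ or $\mathbb{F}_2$) by a chain of point blow-ups through weak del Pezzo surfaces. On the minimal surface one writes an explicit full exceptional collection beginning with $\O$, and iterating Proposition \ref{excep} produces a full exceptional collection $(\O_X,\dots)$ on $X$. I would then convert it into a tilting \emph{bundle} by the Hille--Perling method of repeated universal (co)extensions: Proposition \ref{univan} shows the vanishing of positive self-extensions is preserved, while Propositions \ref{uniind} and \ref{mut} track the indecomposable summands and guarantee that the outputs remain vector bundles with slopes lying strictly between those of the objects being combined. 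Leaving $\O_X$ untouched as the first member keeps it as a direct summand, as required.

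The main obstacle will be the slope control demanded by the reduction: the permitted window has length only $K_X^2$, which drops to $1$ in the degree-one case, whereas a naive exceptional collection of line bundles has widely spread slopes. The purpose of the universal (co)extensions is exactly to fuse line bundles of distant slopes into higher-rank indecomposable bundles of intermediate slope, thereby compressing the window; Proposition \ref{mut} (mutations have intermediate slope and stay locally free) and Proposition \ref{uniind} (universal extensions produce a single new indecomposable) are the levers for this. The hard part is to choose the order of the blow-ups and extensions and to bound the slopes quantitatively so that the final indecomposable summands genuinely fit inside an interval of length $K_X^2$; here the defining geometry of weak del Pezzo surfaces, namely nefness and bigness of $-K_X$ (hence $K_X^2>0$ and the absence of curves of self-intersection at most $-3$), must enter decisively.
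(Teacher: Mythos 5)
Your reduction is correct, and it is essentially the paper's Proposition \ref{2tiltslope}(1) proved by a slightly different (and valid) route: the paper gets the $\Ext^1$- and $\Ext^2$-vanishing from Lemma \ref{ext1} together with Serre duality and stability on $X$, whereas you restrict to the anticanonical elliptic curve $C$ and apply Serre duality there; both arguments rest on the same semistability input from Proposition \ref{stable} and Lemma \ref{pretilt}. So the statement ``a tilting bundle whose indecomposable summands have slopes in an open interval of length $K_X^2$ is $2$-tilting'' is secure.

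The genuine gap is in the construction, and you have flagged it yourself: you never produce the slope compression, you only declare it to be ``the hard part'' where the geometry ``must enter decisively.'' Moreover, the mechanism you propose for it --- fusing line bundles of distant slopes into indecomposables of intermediate slope by universal (co)extensions --- is not how the proof can be made to work: nothing in Propositions \ref{univan} or \ref{uniind} yields the quantitative bound you need, and Proposition \ref{univan} anyway requires hypotheses such as $\T(E,F[>1])=0$ and $\T(F,E[>0])=0$ which are not free across arbitrary slope differences. The paper's actual mechanism (Proposition \ref{mutfec}) is different: starting from the Hille--Perling collection of line bundles obtained via Proposition \ref{excep}, it first sorts the slopes by mutations (Proposition \ref{mut} guarantees mutations stay locally free with intermediate slopes), and then, crucially, uses the helix operation: since $-\otimes\omega_X[2]$ is the Serre functor, $(\E_n(K_X),\E_1,\dots,\E_{n-1})$ is again a full exceptional collection, so the top-slope member can be pulled down by $K_X^2$ and the collection re-sorted; iterating this finitely many times forces all slopes into a window of length strictly less than $K_X^2$. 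Only after that are universal (co)extensions applied --- and only within blocks of \emph{equal} slope, where they do not change slopes at all; their sole job is to kill the remaining $\Ext^1$'s inside each block (the only ones that can survive, by Lemma \ref{ext1} and Proposition \ref{stable}), not to compress slopes. Without the canonical-twist/mutation step your plan cannot reach the required window (on degree-one weak del Pezzo surfaces it has length $1$), so the proposal is incomplete at exactly the decisive point.
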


Throughout this subsection, let $X$ denote a weak del Pezzo surface. As a first step towards Theorem \ref{suff}, we construct a full exceptional collection satisfying slope inequalities.

\begin{Prop}\label{mutfec}
We have a full exceptional collection $(\E_1,\cdots ,\E_n)$ on $\D^b(\Coh X)$ satisfying the following conditions.
\begin{enumerate}
\item $\E_i\in\vect X$ for all $1\leq i\leq n$.
\item We have inequalities
\[\cdots\leq\mu_{-K_X}(\E_n(K_X))<\mu_{-K_X}(\E_1)\leq\mu_{-K_X}(\E_2)\leq\cdots\leq\mu_{-K_X}(\E_n)<\mu_{-K_X}(\E_1(-K_X))\leq\cdots.\]
\item $\E_a=\O_X$ for some $1\leq a\leq n$, and
\[\mu(\E_1)=\cdots=\mu_{-K_X}(\E_a)=0<\mu(\E_{a+1}).\]
\end{enumerate}
\end{Prop}
\begin{proof}
If $X=\mathbb{P}^1\times\mathbb{P}^1$ or $\Sigma_2$, then $(\O_X,\O_X(1,0),\O_X(0,1),\O_X(1,1))$ or $(\O_X,\O_X(F),\O_X(C_0+2F),\O_X(C_0+3F))$ satisfies the conditions where $F$ is a fiber and $C_0$ is a section of the $\mathbb{P}^1$-bundle $\Sigma_2=\mathbb{P}(\O_{\mathbb{P}^1}\oplus\O_{\mathbb{P}^1}(-2))$ with $C_0^2=-2$. Thus, we may assume that $X$ is obtained by an iterated monoidal transformation of $\mathbb{P}^2$.

By Proposition \ref{excep}, we have a full exceptional collection $(\E_1,\cdots ,\E_n)$ on $\D^b(\Coh X)$ consisting of line bundles with $\E_1=\O_X$. By construction, we have $\mu_{-K_X}(\E_i)>0$ for $i\geq2$. Thus by applying Proposition \ref{mut} to $(\E_2,\cdots ,\E_n)$ repeatedly, we may assume $\mu_{-K_X}(\E_1)\leq\cdots\leq\mu_{-K_X}(\E_n)$ and $\E_1=\O_X$. If $\mu_{-K_X}(\E_1)+K_X^2>\mu_{-K_X}(\E_n)$ holds, then there is nothing to prove. If not, then consider the full exceptional collection $(\E_n(K_X),\E_1,\cdots ,\E_{n-1})$ and apply Proposition \ref{mut}(2) repeatedly to obtain $(\E_1,\cdots ,\E_n)$ with $\mu_{-K_X}(\E_1)\leq\cdots\leq\mu_{-K_X}(\E_n)$ and $\E_i=\O_X$ for $i=1$ or $2$. This operation preserves $\mu_{-K_X}(\E_1)=0$ and decreases $\sum_{i=1}^n\mu_{-K_X}(\E_i)$ by at least $K_X^2$. Therefore by repeating this operation, we can get the desired full exceptional collection in a finite step.
\end{proof}

As in \cite[7.3]{VdB04a}, if $X$ is del Pezzo, then the full exceptional collection obtained in Proposition \ref{mutfec} becomes strong and their direct sum turns out to be a $2$-tilting bundle by Theorem \ref{2tiltslope}(1). In the general case, we need a second step: applying iterated universal (co)extensions to this full exceptional collection.

\begin{proof}[Proof of Theorem \ref{suff}]
Take a full exceptional collection $(\E_1,\cdots ,\E_n)$ as in Proposition \ref{mutfec}. Take $1\leq i<i'\leq n$. Since $\mu(\E_{i'})\geq\mu(\E_i)>\mu(\E_i(K_X))$, by Proposition \ref{stable}, we have 
\[\Ext_X^2(\E_i,\E_{i'})\cong D\Hom_X(\E_{i'},\E_i(K_X))=0.\]
Take $0=a_0<a_1<\cdots<a_m=n$ satisfying
\begin{align*}
&\mu_1:=\mu_{-K_X}(\E_{a_0+1})=\cdots=\mu_{-K_X}(\E_{a_1}) \\
&<\mu_2:=\mu_{-K_X}(\E_{a_1+1})=\cdots=\mu_{-K_X}(\E_{a_2}) \\
&<\cdots \\
&<\mu_m:=\mu_{-K_X}(\E_{a_{m-1}+1})=\cdots=\mu_{-K_X}(\E_{a_m}).
\end{align*}
If there exists $1\leq j\leq m$ with $i\leq a_j<i'$, then by Lemma \ref{ext1}, we have $\Ext_X^1(\E_i,\E_{i'})=0$. Therefore only $\Ext_X^1(\E_i,\E_{i'})$ may not vanish where $a_{j-1}<i<i'\leq a_j$ for some $1\leq j\leq m$.

By Proposition \ref{uniext}, for each $1\le j\le m$, there exists a vector bundle $\F_j\in\Filt\{\E_i\mid a_{j-1}<i\le a_j\}\subseteq\C_{\mu_j}$ such that $\F_j\in\thick\{\E_i\mid a_{j-1}<i\le a_j\}$ is a tilting object. By the previous argument, we have $\Ext_X^{>0}(\F_j,\F_{j'})=0$ for $j\neq j'$. Thus $\E:=\bigoplus_{j=1}^m\F_j$ is a tilting bundle. By the construction of Proposition \ref{mutfec}, we have $\E_{a_1}=\O_X$. Thus $\F_1$ has $\O_X$ as a direct summand by its construction.

Take $1\le j,j'\le m$. Observe that $\Ext^1_X(\F_{j'}(-K_X),\F_j)\cong D\Ext_X^1(\F_j,\F_{j'})=0$ holds. Since $\mu_{-K_X}(\F_{j'}(-K_X))=\mu_{j'}+K_X^2>\mu_j=\mu_{-K_X}(\F_j)$, by Lemma \ref{ext1}, we have
\[\Ext^1_X(\F_j,\F_{j'}(-K_X))=0.\]
Since $\mu_{-K_X}(\F_{j'}(-K_X))=\mu_{j'}+K_X^2>\mu_j-K_X^2=\mu_{-K_X}(\F_j(K_X))$, by Corollary \ref{extsemist}, we have
\[\Ext^2_X(\F_j,\F_{j'}(-K_X))\cong D\Hom_X(\F_{j'}(-K_X),\F_j(K_X))=0.\]
Thus $\E$ is a $2$-tilting bundle by Proposition \ref{dtiltsheaf}.
\end{proof}

\section{Applications to non-commutative crepant resolutions}

Let us first recall the definition of non-commutative crepant resolutions (NCCRs).

\begin{Def}\cite[4.1]{VdB04a}
Let $R$ be a Gorenstein normal domain with $\dim R\geq2$. For a reflexive $R$-module $M$, $A:=\End_R(M)$ is called a {\it non-commutative crepant resolution (NCCR)} of $R$ if the following conditions are satisfied.
\begin{enumerate}
\item $A$ is maximal Cohen--Macaulay as an $R$-module.
\item $\gl A_P<\infty$ holds for all $P\in\Spec R$.
\end{enumerate}
\end{Def}

We recall the following standard form of Van den Bergh's
construction. See also \cite[4.15]{IW14}.

\begin{Prop}\label{prop:VdB-construction}
Let $R$ be a Gorenstein normal domain with $\dim R\ge2$ and let
\[
    \rho:Y\to \Spec R
\]
be a projective crepant resolution. Let $\mathcal V\in \vect Y$ be a
tilting bundle containing $\mathcal O_Y$ as a direct summand. Put
\[
    M:=\rho_*\mathcal V=\Gamma(Y,\mathcal V).
\]
Then $M$ is a maximal Cohen--Macaulay $R$-module and the natural homomorphism
\[
    \End_Y(\mathcal V)\longrightarrow \End_R(M)
\]
is an isomorphism. Moreover, $\End_R(M)$ is an NCCR of $R$.
\end{Prop}
\begin{proof}
Although the proof is parallel to \cite[3.2.9]{Vdb04b}, we include a complete proof for the convenience of the reader.

Put $\Gamma:=\End_Y(\mathcal V)=\Gamma(Y,\mathcal End_Y(\mathcal V))$.
Since $\mathcal V$ is tilting, we have
\[
    H^i(Y,\mathcal End_Y(\mathcal V))
    =
    \Ext^i_Y(\mathcal V,\mathcal V)
    =0
    \qquad (i>0).
\]
Equivalently,
\[
    \mathbb R\rho_*\mathcal End_Y(\mathcal V)
    \cong
    \rho_*\mathcal End_Y(\mathcal V)
    =
    \Gamma .
\]

We show that $\Gamma$ is maximal Cohen--Macaulay as an $R$-module.
By Grothendieck duality and the crepancy of $\rho$, we have
\[
\begin{aligned}
    \mathbb R\Hom_R(\Gamma,R)
    &\cong
    \mathbb R\Hom_R(\mathbb R\rho_*\mathcal End_Y(\V),R)  \\
    &\cong
    \mathbb R\rho_*\mathbb R\mathcal Hom_Y
    (\mathcal End_Y(\mathcal V),\rho^!R)       \\
    &\cong
    \mathbb R\rho_*\mathbb R\mathcal Hom_Y
    (\mathcal End_Y(\V),\O_Y)  \\
    &\cong
    \mathbb R\rho_*\mathcal End_Y(\V)          \\
    &\cong
    \Gamma .
\end{aligned}
\]
Thus $\Ext^i_R(\Gamma,R)=0$ for all $i>0$. Since $R$ is Gorenstein,
this implies that $\Gamma$ is a maximal Cohen--Macaulay $R$-module.

Since $\O_Y$ is a direct summand of $\V$, the sheaf
$\V\cong\mathcal Hom_Y(\O_Y,\V)$ is a direct summand of $\mathcal End_Y(\mathcal V)$.
Hence
\[
    M=\rho_*\mathcal V
\]
is a direct summand of $\Gamma=\rho_*\mathcal End_Y(\mathcal V)$.
Therefore $M$ is maximal Cohen--Macaulay.

Next we prove $\Gamma\cong\End_R(M)$.
Both $\Gamma=\rho_*\mathcal End_Y(\mathcal V)$ and $\End_R(M)$ are
reflexive $R$-modules. By our assumption, $\rho$ is an isomorphism in
codimension one. Hence the natural morphism
\[
    \rho_*\mathcal End_Y(\V)\longrightarrow
    \mathcal End_R(\rho_*\V)
\]
is an isomorphism after localizing at every height-one prime of $R$.
Because both sides are reflexive, it is an isomorphism globally. Thus
\[
    \End_Y(\V)\cong\End_R(M).
\]

It remains to check finite global dimension locally. Let
$\mathfrak p\in \Spec R$. Localizing the above situation at
$\mathfrak p$, the bundle $\mathcal V_{\mathfrak p}$ is again a tilting
bundle on the regular scheme
\[
    Y_{\mathfrak p}:=Y\times_{\Spec R}\Spec R_{\mathfrak p}.
\]
Therefore $\Gamma_{\mathfrak p}$ is derived equivalent to the smooth
scheme $Y_{\mathfrak p}$. In particular, $\Gamma_{\mathfrak p}$ has
finite global dimension. Thus $\End_R(M)\cong\Gamma$ is maximal
Cohen--Macaulay over $R$ and has finite local global dimension.
Therefore $\Gamma$ is an NCCR of $R$.
\end{proof}

We introduce Du Val del Pezzo surfaces.

\begin{Def}
A normal projective surface $X$ is called {\it Du Val del Pezzo} if the following conditions are satisfied.
\begin{enumerate}
\item $-K_X$ is ample.
\item $X$ has at worst Du Val singularities.
\end{enumerate}
\end{Def}

In \cite{VdB04a}, del Pezzo cones are shown to have NCCRs by using $2$-tilting bundles on the corresponding del Pezzo surfaces. We generalize this result to Du Val del Pezzo cones.

\begin{Thm}\label{NCCRsingdP}
Let $X$ be a Du Val del Pezzo surface and $R:=\bigoplus_{n=0}^\infty\Gamma(X,\omega_X^{-n})$ its homogeneous coordinate ring. Then $R$ has an NCCR. Moreover, we can take a maximal Cohen--Macaulay $R$-module $M$ giving an NCCR which satisfies the following conditions.
\begin{enumerate}
\item $M$ is a $\mathbb{Z}$-graded $R$-module and the NCCR $\End_R(M)$ is $\mathbb{Z}_{\geq0}$-graded.
\item $M$ has $R$ as a direct summand as a graded module.
\end{enumerate}
\end{Thm}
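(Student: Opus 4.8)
The plan is to pass from the singular Du Val del Pezzo surface $X$ to its minimal resolution $\widetilde{X}$, which is a smooth weak del Pezzo surface, use the $2$-tilting bundle produced by Theorem \ref{suff}, and then descend to obtain an NCCR of the cone $R$. Let $f\colon\widetilde{X}\to X$ be the minimal resolution; since $X$ has at worst Du Val singularities, $f$ is crepant, so $f^*\omega_X\cong\omega_{\widetilde{X}}$ and hence $-K_{\widetilde{X}}=f^*(-K_X)$ is nef and big (it is even the pullback of an ample bundle). Thus $\widetilde{X}$ is weak del Pezzo, and by Theorem \ref{suff} it admits a $2$-tilting bundle $\E$ containing $\O_{\widetilde{X}}$ as a direct summand. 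The key geometric observation is that $R=\bigoplus_{n\geq0}\Gamma(X,\omega_X^{-n})\cong\bigoplus_{n\geq0}\Gamma(\widetilde{X},\omega_{\widetilde{X}}^{-n})$ because $f$ is crepant and $R^if_*\omega_{\widetilde{X}}^{-n}=0$ for $i>0$ (the higher direct images vanish by a Grauert--Riemenschneider type argument, as $-K$ is nef). Therefore the total space $\Tot(\omega_{\widetilde{X}})$ maps to $\Spec R$ as a crepant (partial) resolution, exactly as in the Fano case treated in \cite[7.3]{VdB04a}.

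The central step is to identify the endomorphism algebra $\End_{\Tot(\omega_{\widetilde{X}})}(\pi^*\E)$ with $\End_R(M)$ for a suitable reflexive module $M$, and to verify this is an NCCR. By Proposition \ref{dtiltsheaf}, $\pi^*\E$ is a tilting bundle on $\Tot(\omega_{\widetilde{X}})$ and its endomorphism algebra is the $3$-Calabi-Yau completion $\Pi_3(\End_{\widetilde{X}}(\E))$. I would set $M:=\bigoplus_{n\geq0}\Gamma(\widetilde{X},\E\otimes\omega_{\widetilde{X}}^{-n})$ viewed as a graded $R$-module, so that $\End_R(M)\cong\bigoplus_{n\geq0}\Hom_{\widetilde{X}}(\E,\E\otimes\omega_{\widetilde{X}}^{-n})\cong\End_{\Tot(\omega_{\widetilde{X}})}(\pi^*\E)$, where the global sections over the affine cone $\Spec R$ recover the algebra. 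The grading on $\Tot(\omega_{\widetilde{X}})=\Spec_X(\bigoplus_{n\geq0}\omega_X^{-n})$ is the natural fibrewise $\mathbb{G}_m$-action, which produces the $\mathbb{Z}_{\geq0}$-grading on the NCCR demanded in (1); the summand $\O_{\widetilde{X}}\subseteq\E$ yields $R$ as a graded direct summand of $M$, giving (2).

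To confirm the NCCR axioms, I would argue as follows. The finite global dimension of all localizations follows because $\End_{\Tot(\omega_{\widetilde{X}})}(\pi^*\E)$ is derived-equivalent to $\Tot(\omega_{\widetilde{X}})$, which is smooth (being a line bundle total space over a smooth surface), so the endomorphism algebra has finite global dimension globally, and hence all localizations do. For the maximal Cohen--Macaulay condition over $R$, I would use that $\Tot(\omega_{\widetilde{X}})\to\Spec R$ is a crepant birational morphism between Gorenstein $3$-folds with $\Spec R$ normal (the cone over a Du Val del Pezzo is a normal Gorenstein $3$-fold singularity), so $\pi_*\O$ computations show $\End_R(M)$ is a finitely generated $R$-module that is reflexive and CM by a depth argument using $\Ext^i_R(\text{--},R)$ vanishing, together with the tilting-bundle cohomology vanishing of Proposition \ref{dtiltsheaf}(3).

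The hard part will be the maximal Cohen--Macaulay property over $R$ in the \emph{singular} setting: in Van den Bergh's Fano case the resolution $\Tot(\omega_X)\to\Spec R$ is an isomorphism in codimension one onto a $3$-fold with an isolated singularity, but here $\Spec R$ has non-isolated singularities coming from the Du Val locus of $X$, so one must control the depth of $M$ and of $\End_R(M)$ along the singular strata. I expect this requires checking the CM condition after localizing at each prime, handling the surface singularities of $X$ via the local structure of $\Tot(\omega_{\widetilde{X}})$ over them, and invoking the fact that a tilting bundle on a crepant resolution yields an NCCR whenever the resolution is crepant and the spaces are Gorenstein — a principle that generalizes \cite[7.2]{VdB04a} but needs a careful depth/reflexivity verification rather than appeal to an isolated-singularity shortcut.
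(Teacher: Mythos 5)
Your route is the paper's route: minimal resolution $\widetilde{X}$ (weak del Pezzo by crepancy), the $2$-tilting bundle $\E\supseteq\O_{\widetilde{X}}$ from Theorem \ref{suff}, pullback to $T:=Tot(\omega_{\widetilde{X}})$, $M:=\Gamma(T,\pi^*\E)$, and Van den Bergh's argument to get the NCCR; the grading and the $R$-summand come out exactly as you say. Two small corrections to the setup: $T\to\Spec R$ is a genuine resolution, not a partial one ($T$ is smooth), and the identification $R\cong\bigoplus_n\Gamma(\widetilde{X},\omega_{\widetilde{X}}^{-n})$ needs no Grauert--Riemenschneider-type vanishing --- $f_*\O_{\widetilde{X}}=\O_X$ plus the projection formula suffices, which matters because the paper works in arbitrary characteristic, where such vanishing theorems are delicate.

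The real issue is your last paragraph, and it is a gap of diagnosis rather than of substance: you leave the maximal Cohen--Macaulay property as an unproven ``hard part'' and propose a prime-by-prime depth check along the singular strata of $\Spec R$, on the grounds that Van den Bergh's case had an isolated singularity. This concern is unfounded, and the paper is right to invoke ``the same argument as \cite[7.2]{VdB04a}'' verbatim, because that argument never sees the singular locus. Concretely: $\End_T(\pi^*\E)=\bigoplus_{n\geq0}\Hom_{\widetilde{X}}(\E,\E\otimes\omega_{\widetilde{X}}^{-n})$ is a finitely generated $\mathbb{Z}$-graded module over the graded ring $R$ with $R_0=k$, and a graded module is Cohen--Macaulay if and only if it is so after localizing at the irrelevant maximal ideal $\mathfrak{m}$; there the depth is computed by graded local cohomology of the cone, which for $i\geq1$ gives $H^{i+1}_\mathfrak{m}\cong\bigoplus_{n\in\mathbb{Z}}H^i(X,f_*\sheafHom_{\widetilde{X}}(\E,\E)\otimes\omega_X^{-n})$, and by the Leray injection $H^1(X,f_*(-))\hookrightarrow H^1(\widetilde{X},-)$ the required vanishing reduces to $\Ext^1_{\widetilde{X}}(\E,\E\otimes\omega_{\widetilde{X}}^{-n})=0$ for all $n\in\mathbb{Z}$ and $\Hom_{\widetilde{X}}(\E,\E\otimes\omega_{\widetilde{X}}^{-n})=0$ for $n<0$. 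For $n\geq0$ this is Proposition \ref{dtiltsheaf}(3); for $n<0$ it follows from the same vanishing by Serre duality on the smooth surface $\widetilde{X}$. Nothing about isolatedness enters. (Equivalently, one can argue by Grothendieck duality: $\mathbb{R}f_*\sheafHom_T(\pi^*\E,\pi^*\E)$ is concentrated in degree $0$ since $\pi^*\E$ is tilting over an affine base, and its $\omega_R$-dual is $\mathbb{R}f_*(\sheafHom_T(\pi^*\E,\pi^*\E)^\vee\otimes\omega_T)$, which by crepancy $\omega_T\cong f^*\omega_R$ and self-duality of endomorphism bundles is again concentrated in degree $0$; local duality then gives MCM.) Finally, the identification $\End_R(M)\cong\End_T(\pi^*\E)$ via reflexivity only needs $f$ to be an isomorphism outside a codimension $\geq2$ subset of $\Spec R$, and this is automatic for any proper birational morphism onto a normal variety: here the exceptional set maps onto the vertex together with the cone lines over the Du Val points, which has codimension $2$ --- smaller than in the Fano case, but still enough. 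So no new principle beyond \cite[7.2]{VdB04a} is required; the verification you deferred is exactly that argument, unchanged.
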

\begin{proof}
By \cite[2.8, 2.9]{Wat81}, $R$ is a $3$-dimensional Gorenstein normal
domain.

Let $f:\widetilde X\to X$ be the minimal resolution. Since $X$ has only Du Val singularities,
$f$ is crepant. Hence $K_{\widetilde X}=f^*K_X$.
In particular, $-K_{\widetilde X}=f^*(-K_X)$ is nef and big. Thus
$\widetilde X$ is a weak del Pezzo surface. Thus by Theorem \ref{suff}, there exists a
$2$-tilting bundle $\E\in \vect \widetilde X$ which contains $\O_{\widetilde X}$ as a direct summand.

Let
\[
    \pi:T:=\Tot_{\widetilde X}(\omega_{\widetilde X})\to \widetilde X
\]
be the total space of the canonical bundle. By Proposition
\ref{dtiltsheaf}, the pullback $\V:=\pi^*\E$
is a tilting bundle on $T$. Moreover, since $\E$ contains
$\O_{\widetilde X}$ as a direct summand, $\V$ contains
$\O_T$ as a direct summand.

We next construct a crepant resolution of $\Spec R$. Since $f$ is
crepant, we have $\omega_{\widetilde X}\cong f^*\omega_X$.
Therefore
\[
    T=\Tot_{\widetilde X}(\omega_{\widetilde X})
      \cong
      \Tot_X(\omega_X)\times_X \widetilde X
\]
holds. On the other hand,
\[
    \Gamma(\Tot_X(\omega_X),\mathcal O_{\Tot_X(\omega_X)})
    =
    \bigoplus_{n\geq 0}H^0(X,\omega_X^{-n})
    =
    R.
\]
Since $-K_X$ is ample, the natural morphism
\[
    \Tot_X(\omega_X)\to \Spec R
\]
is projective and birational. Hence the composition
\[
    \rho:T
       =
       \Tot_{\widetilde X}(\omega_{\widetilde X})
       \longrightarrow
       \Tot_X(\omega_X)
       \longrightarrow
       \Spec R
\]
is projective and birational.

We claim that $\rho$ is crepant. Indeed, $T$ is smooth and
\[
    \omega_T
    \cong
    \pi^*(\omega_{\widetilde X}\otimes \omega_{\widetilde X}^{-1})
    \cong
    \O_T.
\]
Moreover, $R$ is Gorenstein and its canonical module is isomorphic, as
an ungraded $R$-module, to $R$. Since $\rho$ is an isomorphism over the
regular locus in codimension one, the equality $\omega_T\simeq
\rho^*\omega_{\Spec R}$ follows. Thus $\rho$ is a projective crepant
resolution of $\Spec R$.

Now apply Proposition \ref{prop:VdB-construction} to $\rho:T\to\Spec R$ and $\V$.
Then
\[
    M:=\rho_*\V=\Gamma(T,\V)
\]
is a reflexive $R$-module and
\[
    \End_R(M)\cong\End_T(\V)
\]
is an NCCR of $R$.

Finally, we check the grading and the direct summand condition. We have
\[
    M
    =
    \Gamma(T,\pi^*\E)
    \cong
    \bigoplus_{n\geq 0}
    \Gamma(\widetilde X,\E\otimes \omega_{\widetilde X}^{-n}).
\]
Thus $M$ is naturally a $\mathbb Z_\ge0$-graded $R$-module. Similarly,
\[
    \End_R(M)
    \cong
    \End_T(\pi^*\E)
    \cong
    \bigoplus_{n\geq 0}
    \Hom_{\widetilde X}
    (\E,\E\otimes \omega_{\widetilde X}^{-n}),
\]
so $\End_R(M)$ is naturally $\mathbb Z_{\geq 0}$-graded.

Since $\E$ contains $\O_{\widetilde X}$ as a direct summand,
$\pi^*\E$ contains $\O_T$ as a direct summand. Therefore $M$
contains
\[
    \Gamma(T,\O_T)
    \cong
    \Gamma(\Tot_X(\omega_X),\O_{\Tot_X(\omega_X)})
    \cong
    R
\]
as a graded direct summand. This proves all assertions.
\end{proof}

\begin{Rem}
By Proposition \ref{dtiltsheaf}, we have $\End_R(M)\cong\Pi_3(\End_{\widetilde{X}}(\E))$.
\end{Rem}

\begin{Ex}
Let $f\in S:=k[x,y,z,w]$ be a homogeneous polynomial of degree $3$ and $R:=S/(f)$. If $X:=(f=0)\subseteq\mathbb{P}^3$ is smooth, then it is known that $R$ has an NCCR by \cite[7.3]{VdB04a}. Our results imply that even when $X$ is singular, if $\Sing X$ consists only of Du Val singularities, then $R$ has an NCCR. For example, if $f=xy(x+y+z)+w^3$ and $\ch k\neq3$, then $\Sing X=\{[0:0:1:0]\}$ consists of a Du Val singularity of type $A_2$.
\end{Ex}

\begin{appendix}

\section{Structure of rigid torsion-free sheaves}

We will prove the following main result in this appendix.

\begin{Thm}\label{rigtorfss}
Let $X$ be a weak del Pezzo surface and $\F\in\Coh X$ an indecomposable rigid torsion-free sheaf with $\mu:=\mu_{-K_X}(\F)$. Then we have
\[\F\in\C_\mu.\]
In particular, $\F$ is pretilting and $\mu_{-K_X}$-semistable (see Corollary \ref{extsemist}).
\end{Thm}

This is known when $\ch k=0$ \cite[2.4.1]{Kul}. Our proof is essentially the same as \cite{Kul}, but we have to be careful that there does not necessarily exist a smooth anti-canonical curve in positive characteristic \cite{KN}. In addition, our proof does not use spectral sequences.

We use the following lemma by Mukai repeatedly.

\begin{Lem}\label{heredrig}\cite{Muk84}
Let $\T$ be a triangulated category. Take an exact triangle $X\to Y\to Z\dashrightarrow$ in $\T$. If $\T(X,Z)=0$ and $\T(Z,X[2])=0$ holds, then we have
\[\dim_k\T(Y,Y[1])\ge\dim_k\T(X,X[1])+\dim_k\T(Z,Z[1]).\]
\end{Lem}

As an immediate corollary, we obtain the following well-known fact which will be used freely.

\begin{Cor}\label{rigtorfbdl}\cite{Muk84}\cite[2.3]{KO}
Let $X$ be a smooth projective surface. Then all rigid torsion-free sheaves on $X$ are vector bundles.
\end{Cor}

In what follows, let $X$ be a weak del Pezzo surface. We introduce another slope function on $X$. Take and fix an ample divisor $A\in\Div X$. For non-zero torsion-free sheaves $\F\in\Coh X$, define
\[\gamma(\F):=\Big(\mu_{-K_X}(\F),\mu_A(\F), \frac{c_1(\F)^2-2c_2(\F)}{\rk(\F)}\Big)\in\mathbb{Q}^3.\]
Here, we equip $\mathbb{Q}^3$ with the lexicographical order. Then we can check that this new slope has the following desirable property as stated in \cite{Kul}.

\begin{Prop}
Let $\E,\F\in\Coh X$ be non-zero torsion-free sheaves.
\begin{enumerate}
\item Assume we have an inclusion $\E\subsetneq\F$ and they have the same rank. Then we have
\[\gamma(\E)<\gamma(\F).\]
\item Assume $\E$ is $\gamma$-semistable and $\F$ is $\gamma$-stable and $\gamma(\E)=\gamma(\F)$. Then any non-zero homomorphism $\E\to\F$ is an epimorphism.
\end{enumerate}
\end{Prop}
\begin{proof}
(1) It is easy to see that $\mu_{-K_X}(\E)\le\mu_{-K_X}(\F)$ and $\mu_A(\E)\le\mu_A(\F)$ hold. Assume that we have $\mu_{-K_X}(\E)=\mu_{-K_X}(\F)$ and $\mu_A(\E)=\mu_A(\F)$. If we put $\G:=\F/\E\neq0$, then $(c_1(\G)\cdot A)=0$ holds by our assumption. Since $A$ is ample, this implies $\dim\G=0$. Thus for any $n\ge0$, we have $\chi(\G\otimes\omega_X^{-n})>0$ and  $\chi(\E\otimes\omega_X^{-n})<\chi(\F\otimes\omega_X^{-n})$. Therefore we obtain $c_1(\E)^2-2c_2(\E)<c_1(\F)^2-2c_2(\F)$.

(2) This is an immediate corollary of (1).
\end{proof}

The following is a technical lemma which supports Lemma \ref{ssCmu}.

\begin{Lem}\label{compfacrig}
Let $\T$ be a triangulated category. Let $E\in\T$ be an object with $\T(E,E[2])=0$. Assume there exists an object $F\in\Filt E$ satisfying $\T(F,F[1])=0$. Then we have
\[\T(E,E[1])=0.\]
In particular, $\Filt E=\add E$ holds.
\end{Lem}
\begin{proof}
We have two exact triangles
\[E\xrightarrow{a}F\to G\dashrightarrow\text{ and }G'\to F\xrightarrow{b}E\dashrightarrow\]
where $G,G'\in\Filt E$. Take a morphism $f\colon E\to E[1]$. Since $\T(G[-1],E[1])=0$ holds, there exists $g\colon F\to E[1]$ with $ga=f$. Since $\T(F,G'[2])=0$, there exists $h\colon F\to F[1]$ with $b[1]h=g$. Here, by our assumption, we have $h=0$. Thus we obtain
\[f=b[1]ha=0.\qedhere\]
\end{proof}

The following lemma corresponds to \cite[2.4.3]{Kul}.

\begin{Lem}\label{ssCmu}
Let $\F\in\Coh X$ be a $\mu_{-K_X}$-semistable rigid torsion-free sheaf with $\mu:=\mu_{-K_X}(\F)$. Then we have $\F\in\C_{\mu}$.
\end{Lem}
\begin{proof}
Consider the Harder-Narasimhan filtration $\F\in\G_n*\cdots*\G_1$ of $\F$ with respect to $\gamma$. Since $\gamma(\G_n)>\cdots>\gamma(\G_1)$, we have $\mu_{-K_X}(\G_n)\ge\cdots\ge\mu_{-K_X}(\G_1)$. Since $\F$ is $\mu_{-K_X}$-semistable, this means $\mu_{-K_X}(\G_n)=\cdots=\mu_{-K_X}(\G_1)=\mu$ by induction. Thus we may assume that $\F$ is $\gamma$-semistable.

By \cite[1.3.7]{Kul}, there exists $\gamma$-stable torsion-free sheaves $\E_1,\cdots,\E_n$ with $\gamma(\E_1)=\cdots=\gamma(\E_n)=\gamma(\F)$ and $\G_i\in\Filt\E_i$ such that there exists short exact sequences
\[0\to \F_{i+1}\to\F_i\to\G_i\to0\ (1\le i\le n)\]
with $\Hom_X(\F_{i+1},\E_i)=0$, $\F_1=\F$ and $\F_{n+1}=0$. Since $\mu_{-K_X}(\E_1)=\cdots=\mu_{-K_X}(\E_n)$ holds and $\E_1,\cdots,\E_n$ are $\mu_{-K_X}$-semistable, we have
\[\Ext^2_X(\E_i,\E_j)\cong D\Hom_X(\E_j,\E_i(K_X))=0\]
for each $1\le i,j\le n$. Thus $\T(\G_i,\F_{i+1}[2])=0$ holds and by induction, all $\F_i$ and $\G_i$ are rigid by Lemma \ref{heredrig}. Therefore by Lemma \ref{compfacrig}, each $\E_i$ is rigid and thus an exceptional bundle by Corollary \ref{rigtorfbdl}. This proves the assertion.
\end{proof}

Now we can prove Theorem \ref{rigtorfss}.

\begin{proof}[Proof of Theorem \ref{rigtorfss}]
Let $\F$ be a rigid torsion-free sheaf. Take the Harder-Narasimhan filtration $\F\in\G_n*\cdots*\G_1$ of $\F$ with respect to $\mu_{-K_X}$. If we put $\mu_i:=\mu_{-K_X}(\G_i)$, then we have $\mu_n>\cdots>\mu_1$. Thus we have $\Hom_X(\G_i,\G_j)=0$ for $i>j$ and $\Ext^2_X(\G_j,\G_i)=0$ for $i\ge j$. By Lemma \ref{heredrig}, we can check that each $\G_i$ is rigid by induction. Therefore $\G_i\in\C_{\mu_i}$ holds by Lemma \ref{ssCmu}. In what follows, we show $\F\cong\bigoplus_{i=1}^n\G_i$ by induction on $n$.

Now we prove $\Ext_X^1(\G_{i-1},\G_i)=0$. By Lemma \ref{ext1}, it is enough to show $\Ext_X^1(\G_i,\G_{i-1})=0$. By using Lemma \ref{heredrig} twice, we obtain a short exact sequence $0\to\G_i\xrightarrow{a}\G\xrightarrow{b}\G_{i-1}\to0$ with $\G$ rigid. Take a morphism $f\colon\G_i\to\G_{i-1}[1]$. Since $fb[-1]\in\D(X)(\G_{i-1}[-1],\G_{i-1}[1])=0$, there exists $g\colon\G\to\G_{i-1}[1]$ with $ga=f$. Since $\Ext^2_X(\G_i,\G_i)=0$ and $\Ext^2_X(\G_{i-1},\G_i)=0$ hold, we have $\Ext^2_X(\G,\G_i)=0$. Thus $g$ factors through a morphism $\G\to\G[1]$. Here, since $\G$ is rigid, we obtain $g=0$. Therefore we have $f=0$.

Take a short exact sequence $0\to\F'\to\F\to\G_1\to0$ with $\F'\in\G_n*\cdots*\G_2$. By the induction hypothesis, we have $\F'\cong\bigoplus_{i=2}^n\G_i$. Since $\F'$ is rigid, this implies $\Ext^1_X(\G_i,\G_j)=0$ for $2\le i,j\le n$. Remark that we can write $\F\in\G_2*\G_n*\cdots*\G_3*\G_1$. By the arguments in the previous paragraph, we have $\Ext_X^1(\G_1,\G_2)=0$. Thus we obtain $\F\cong\G_2\oplus\F''$ where $\F''\in\G_n*\cdots*\G_3*\G_1$. By the induction hypothesis, we have $\F''\cong\G_1\oplus\bigoplus_{i=3}^n\G_i$. This proves the claim.
\end{proof}

As an application, we obtain a criterion for tilting bundles to be $2$-tilting in terms of their slopes. (1) gives a sufficient condition to be $2$-tilting which generalizes the final argument of the proof of Theorem \ref{nece}. (2) says that the condition in (1) is almost necessary.

\begin{Thm}\label{2tiltslope}
Let $\E\in\vect X$ be a tilting bundle.
\begin{enumerate}
\item Assume that for all indecomposable direct summands $\E_1$ and $\E_2$ of $\E$, we have $\mu_{-K_X}(\E_1)<\mu_{-K_X}(\E_2(-K_X))$. Then $\E$ is $2$-tilting.
\item Assume $\E$ is $2$-tilting. Then for all indecomposable direct summands $\E_1$ and $\E_2$ of $\E$, we have $\mu_{-K_X}(\E_1)\leq\mu_{-K_X}(\E_2(-K_X))$.
\end{enumerate}
\end{Thm}
\begin{proof}
(1) Take indecomposable direct summands $\E_1$ and $\E_2$ of $\E$. Since $\E$ is tilting, we have
\[\Ext^1_X(\E_2(-K_X),\E_1)\cong D\Ext^1_X(\E_1,\E_2)=0.\]
Thus by Lemma \ref{ext1}, we obtain
\[\Ext^1_X(\E_1,\E_2(-K_X))=0.\]
Since $\mu_{-K_X}(\E_2(-K_X))>\mu_{-K_X}(\E_1)>\mu_{-K_X}(\E_1(K_X))$, by Proposition \ref{stable}, we have
\[\Ext^2_X(\E_1,\E_2(-K_X))\cong D\Hom_X(\E_2(-K_X),\E_1(K_X))=0.\]

(2) Since $\E$ is $2$-tilting, we have
\[\chi(\sheafHom_X(\E_1,\E_2(-K_X)))=\dim_k(\Hom_X(\E_1,\E_2(-K_X)))\text{ and}\]
\[\chi(\sheafHom_X(\E_1,\E_2))=\dim_k(\Hom_X(\E_1,\E_2)).\]
Suppose that $\mu_{-K_X}(\E_1)>\mu_{-K_X}(\E_2(-K_X))$ holds. Since $\E_1,\E_2(-K_X)$ and $\E_2$ are $\mu_{-K_X}$-semistable by Theorem \ref{rigtorfss} and $\mu_{-K_X}(\E_1)>\mu_{-K_X}(\E_2(-K_X))>\mu_{-K_X}(\E_2)$ holds, we have
\[\Hom_X(\E_1,\E_2(-K_X))=0\text{ and }\Hom_X(\E_1,\E_2)=0.\]
 By the Grothendieck-Riemann-Roch theorem, we have
\begin{align*}
\chi(\sheafHom_X(\E_1,\E_2(-K_X)))-\chi(\sheafHom_X(\E_1,\E_2))&=\chi(\sheafHom_X(\E_1,\E_2(-K_X)))-\chi(\sheafHom_X(\E_2(-K_X),\E_1))\\
&=\rk(\E_2(-K_X))\rk(\E_1)(\mu_{-K_X}(\E_2(-K_X))-\mu_{-K_X}(\E_1))\\
&<0,
\end{align*}
which leads to a contradiction. Thus we obtain $\mu_{-K_X}(\E_1)\leq\mu_{-K_X}(\E_2(-K_X))$.
\end{proof}

\end{appendix}

\bibliographystyle{alpha} 
\bibliography{reference}

\end{document}